\documentclass[10pt]{article}

\usepackage{latexsym,color,amsmath,amsthm,amssymb,amscd,amsfonts}

\setlength{\textwidth}{5.3in} \setlength{\evensidemargin}{0.7in}
\setlength{\oddsidemargin}{0.7in} \setlength{\textheight}{8.3in}
\setlength{\topmargin}{-0.1in} \setlength{\parskip}{2mm}
\setlength{\baselineskip}{1.7\baselineskip}
\newtheorem{theo}{Theorem}

\newtheorem{cor}[theo]{Corollary}
\newtheorem{prop}[theo]{Proposition}
\newtheorem{defn}[theo]{Definition}

\def\R{\R}

\def\R{{\mathbb R}}
\def\grad{\nabla}

\def\qed{\hfill $\vcenter{\hrule height .3mm
\hbox {\vrule width .3mm height 2.1mm \kern 2mm \vrule width .3mm
height 2.1mm} \hrule height .3mm}$ \bigskip}

\def\lam{\lambda}

\def\pmx{\begin{pmatrix}}
\def\emx{\end{pmatrix}}

\def\det{{\rm det}}

\def\R{\mathbb R}

\begin{document}

\title{Mixed $f$-divergence and inequalities for log concave functions
\footnote{Keywords: entropy, divergence, affine isoperimetric inequalities, log Sobolev inequalities. 2010 Mathematics Subject Classification: 46B, 52A20, 60B}}

\author{Umut Caglar and Elisabeth M.  Werner\thanks{Partially supported by an  NSF grant}}

\date{}

\maketitle
\begin{abstract}
Mixed $f$-divergences, a concept from information theory and statistics,  measure the difference between multiple pairs of distributions. 
We introduce them for log concave  functions and  establish some of their  properties.  Among them  are affine invariant vector entropy inequalities, like 
 new  Alexandrov-Fenchel type inequalities and an affine isoperimetric inequality for the vector form of the Kullback Leibler divergence for log concave functions. 
\par
Special cases of $f$-divergences are 
mixed $L_\lambda$-affine
surface areas for log concave  functions. For those, we   establish various  affine isoperimetric inequalities as well as a vector Blaschke   Santal\'{o} type inequality. 
\end{abstract}

\section{Introduction}
Affine invariant notions have had a transformative effect in convex geometry, e.g.,  \cite{GaZ, Ludwig-Reitzner,LutwakYangZhang2002, Schu, Zhang1994}. One reason for this is that there are powerful inequalities associated to those
notions. See,  for instance,   \cite{GaZ, HabSch2, Ludwig2003, Lutwak1996, LutwakYangZhang2002, LutwakYangZhang2004, WernerYe2008, WernerYe2009}. Within the last few years,  amazing connections have been discovered between some of these affine invariant notions and  concepts from information theory, e.g., \cite{Gardner2002, GLYZ, JenkinsonWerner, LutwakYangZhang2002/1, LutwakYangZhang2004/1, LutwakYangZhang2005, PaourisWerner2011}, 
leading to a totally new point of view and introducing  a whole new set of tools in the area of convex geometry. 
In particular, it was observed in \cite{Werner2012/1} that one of the most important affine invariant notions,
the $L_p$-affine surface area for convex bodies \cite{Lutwak1996, SW2004},  is R\'enyi entropy from information theory and statistics.   R\'enyi entropies are special cases of $f$-divergences.  Consequently those were then introduced for convex bodies and their corresponding entropy inequalities have been established in \cite{Werner2012}.
An $f$-divergence (see below for the precise definition) is a function that measures the difference between (probability)
densities. Aside from R\'enyi entropies, e.g.,  the relative entropy or Kullback-Leibler divergence \cite{KullbackLeibler1951}  and  the Bhattcharyya distance \cite{Bhattacharyya1946} are examples of $f$-divergences.
\par
Much effort has been devoted lately to extend concepts and inequalities that hold for convex bodies to the corresponding ones for classes of functions. A natural analogue for a convex body is a 
log concave function. For those, functional analogues of  important inequalities have been proved,  such as the Blaschke Santal\'o inequality \cite{ArtKlarMil, KBallthesis,  MeyerFradelizi2007, Lehec2009} and the affine isoperimetric inequality
\cite{ArtKlarSchuWer}. In \cite{CaglarWerner}, $f$-divergences  were introduced for log concave functions. 
This new concept yielded entropy inequalities which  are stronger than the already existing ones, the reverse log-Sobolev and the reverse Poincare inequalities  of \cite{ArtKlarSchuWer}. 
\par
Now we develop these ideas even further and  introduce  the  mixed $f$-divergence for log concave functions.
For convex bodies these were introduced and developed in \cite{WernerYe2013}.
Mixed $f$-divergence,  which is  important in  applications, such as statistical hypothesis testing and classification, see e.g., \cite{ Menendez, MoralesPardo1998, Zografos1998}, 
measures the difference between multiple pairs of (probability) distributions.
Examples  include, e.g., the Matusita's affinity \cite{Matusita1967, Matusita1971}, the Toussaint's affinity \cite{Toussaint1974}, the information radius \cite{Sibson1969} and the average divergence \cite{Sgarro1981}. 
Mixed $f$-divergence is  an extension of the classical $f$-divergence and can be viewed as a vector form of classical $f$-divergence. For a vector $\vec \varphi = (\varphi_i)_{1 \leq i \leq n}$ consisting of log concave functions   $\varphi_i: \mathbb{R}^{n}  \rightarrow [0, \infty)$ and  a vector $\vec f = (f_i)_{1 \leq i \leq n}$ consisting of  concave or convex  functions $f_i: (0, \infty) \rightarrow \mathbb{R}_+$,  we define the mixed $f$-divergence for $\vec{\varphi}= \left(\varphi_1, \cdots, \varphi_n\right)$ by
\begin{eqnarray}\label{No.1}
D_{\vec f} \left(P_{\vec \varphi} , Q_{\vec \varphi} \right) & = & {\displaystyle \int } \prod_{i=1}^{n} \left[ \varphi_i  \  f_i \left(
\frac{e^{\frac{\langle\grad\varphi_i, x \rangle}{\varphi_i}} }{\varphi_i^2} \  \mbox{det} \left[  \text{Hess} \left( -\log \varphi_i \right)\right] \right) \right]^{\frac{1}{n}} dx.
\end{eqnarray}
If all $\varphi_i$ are the same and all $f_i$  are the same, we recover the $f$-divergences of \cite{CaglarWerner}. Like those, the new  expressions are $SL(n)$ invariant. Here, 
$\nabla \varphi$ denotes the gradient and $\text{Hess}(\varphi) =  \left(\frac{\partial ^2 \varphi}{\partial x_i \partial x_j} \right)_{1 \leq i,j\leq n}$ is the Hessian 
of $\varphi$. 
\par
One of the difficulties, to introduce this notion, was to find the right expression for the densities. 
A passage from functions to convex bodies and back, lets us achieve this goal and it can be seen that the expressions (\ref{No.1}) appear naturally. This is demonstrated in
\cite{CaglarWerner}.
\par
The study of  mixed $f$-divergences   leads  us to obtain new linear, respectively,  affine invariant entropy inequalities, among them  
new Alexandrov-Fenchel type inequalities  for log concave functions.
Alexandrov-Fenchel inequality is a fundamental result in  geometry.  It is arguably one of the strongest inequality in this area as many important inequalities such as  the Brunn-Minkowski inequality and  Minkowski's first inequality
follow from Alexandrov-Fenchel inequality (see, e.g., \cite{Gardner2002, Sch}). Different generalization of Alexandrov-Fenchel inequalities for log concave functions can be found in  e.g., \cite{MilmanRotem2013}. 
Various vector entropy inequalities are consequences of this new Alexandrov-Fenchel inequality, for instance the following 
 upper bound for the vector form of the  $f$-divergence in terms of the classical $f$-divergences 
\begin{eqnarray*}
 \left[ D_{\vec f} \left( P_{\vec \varphi}, Q_{\vec \varphi} \right) \right]^n  \leq  
 \prod_{k=1}^{ n }  D_{  f_{k }}  \left( P_{\varphi_{k}},  Q_{\varphi_{k}}\right),
\end{eqnarray*}
and an affine isoperimetric inequality for the vector form of the relative entropy for normalized  log concave functions, 
\begin{eqnarray*}
D_{KL} \left(P_{\vec \varphi} , Q_{\vec \varphi} \right)  \leq 
 \log   (2 \pi)^n.
\end{eqnarray*}
We refer to Theorem \ref{Alexandrov-Fenchel} and Corollary \ref{Korrolar2} for the detailed statements and the corresponding equality characterizations.
While for the classical Alexandrov-Fenchel  inequality for convex bodies the equality characterizations are not known in general, such equality characterizations can be established for
these new Alexandrov-Fenchel inequalities  for log concave functions. To do so, we use, among other things,  the matrix version of the Brunn-Minkowski inequality
 and recently established unique solutions of certain Monge Amp\`ere differential equations \cite{Werner-Yolcu}.
\par
Mixed $L_\lambda$-affine surface areas for a vector $\vec{\varphi}$ of log-concave functions, denoted by $ as_{\lambda} (\vec{ \varphi})  $, are  special cases of  mixed $f$-divergences.  This new definition corresponds, on the level of convex bodies,  to the mixed
$L_p$-affine surface areas (see, e.g.,  \cite{Lut1987, WernerYe2009, Ye2012}),  a generalization of  $L_p$-affine surface areas.  We refer to e.g.,   \cite{HabSch2}, \cite{Ludwig-Reitzner1999}, \cite{Ludwig-Reitzner},  \cite{Lutwak1996}, \cite{MW1}, \cite{SW1990}, \cite{SW2004},\cite{ Werner2007}-\cite{ WernerYe2008} for more information on $L_p$-affine surface area for convex bodies. 
The $L_p$-affine surface areas for functions were   introduced in \cite{CFGLSW}. 
\par
We establish  several affine isoperimetric inequalities for these quantities. Among  them is a   vector Blaschke Santal\'o type inequality for log concave functions with 
barycenter at $0$, 
\begin{equation*}
 as_{\lambda} (\vec{ \varphi})  as_{\lambda} ( \vec{\varphi^\circ}) \  \leq \ (2 \pi)^n.
\end{equation*} 
Here,   $ \varphi^\circ$ is the dual function of $\varphi$, defined in (\ref{Polare}) and  $ \lambda \in [0,1]$.
\par
Please note that all the definitions and results hold,  with obvious modifications,  for $s$-concave functions as well. We refer to  \cite{CaglarWerner} for that.
\par
Throughout the paper we will assume that the convex or concave  functions $f: (0, \infty) \rightarrow \mathbb{R}$ and the 
 log concave  functions 
$ \varphi:\R^{n}\rightarrow [0, \infty)$ have enough smoothness and integrability properties so that the expressions 
considered in the statements make sense, i.e., we will always assume that $\varphi$ and $\varphi^\circ$ $\in C^2  \cap L^1(\R^n, dx)$,  
where  $C^2$  denotes the twice continuously differentiable functions, and that 
\begin{equation}\label{assume2}
 \prod_{i=1}^{n} \left[\varphi_i f_i 
\left(
\frac{e^{\frac{\langle \nabla \varphi_i, x\rangle}{\varphi_i}}
}
{\varphi_i^{2}} 
\mbox{det} \left(  \text{Hess} \left(-\ln \varphi_i \right) 
\right) \right) \right] \in   L^1(\R^n, dx).
\end{equation}

\section{ Mixed $f$-divergence} \label{Section-fdiv}
\subsection{Background on mixed $f$-divergence} \label{subsection: mixedf-div}
In information theory, probability theory and statistics, an
$f$-divergence is a function that measures the difference between two (probability)
distributions. This notion was introduced by 
Csisz\'ar \cite{Csiszar}, and independently Morimoto \cite{Morimoto1963} and Ali \& Silvery \cite{AliSilvery1966}.
\par
Let $(X, \mu)$ be a finite measure space  and let  $P=p \mu$ and  $Q=q \mu$ be  (probability) measures on $X$ that are  absolutely continuous with respect to the measure $\mu$.  
Let $f: (0, \infty) \rightarrow  \mathbb{R}$ be a convex  or a concave  function.
The $*$-adjoint function $f^*:(0, \infty) \rightarrow  \mathbb{R}$ of $f$  is defined by 
\begin{equation}\label{adjoint}
f^*(t) = t f (1/t), \ \  t\in(0, \infty).
\end{equation}
\par
It is obvious   that $(f^*)^*=f$ and that $f^*$ is again convex  if $f$ is convex,  respectively concave if $f$ is concave.
Then the $f$-divergence   $D_f(P,Q)$ of the measures $P$ and $Q$ is defined by 
\begin{equation}\label{def:fdiv2}
D_f(P,Q)=
 \int_{X} f\left(\frac{p}{q} \right) q d\mu.
\end{equation}
\vskip 2mm
It is a generalization of well known divergences, such as, the {\em variational distance},  the {\em Kullback-Leibler divergence} or {\em relative entropy}, the {\em R\'enyi divergence}  and many more.
More on $f$-divergence can be found in e.g. \cite{GarciaWilliamson2012, LieseVajda1987,  LieseVajda2006, OsterrVajda, ReidWilliamson2011, Werner2012, WernerYe2013}.
\vskip 3mm
For applications, such as  statistical hypothesis test and classification, it is important to have extension of $f$-divergence from two (probability) measures to multiple (probability) measures,  see e.g., \cite{ Menendez, MoralesPardo1998, Zografos1998}. 
\par
For $1 \leq i \leq n$, let $P_i = p_i \mu $ and $Q_i = q_i \mu $ be probability measures on $X$ that are absolutely continuous
 with respect to the measure $\mu$. We also assume that the density functions $p_i$ and $q_i$ are nonzero almost everywhere with respect to $\mu$. Denote by
$$
\vec{\textbf{P}}  =  (P_1, P_2, \cdots, P_n) , \quad \vec{\textbf{Q}}  =  (Q_1, Q_2, \cdots, Q_n) .
$$
We use $ \vec p $ and $\vec{q}$ to denote the density vectors for $ \vec{\textbf{P}} $ and $\vec{\textbf{Q}}$ respectively,
$$
\frac{ d \vec{\textbf{P}} }{d \mu}  = \vec p =  (p_1, p_2, \cdots,  p_n) , \quad \frac{ d \vec {\textbf{Q}}}{d \mu}  = \vec q =  (q_1, q_2, \cdots,  q_n) .
$$ 
For $1 \leq i \leq n$, let $f_i : (0, \infty) \rightarrow  \R_+ $  be either convex or concave functions. Denote by $\vec f $ the vector $\vec f  = (f_1, f_2, \cdots, f_n)$ and the $*$-adjoint vector of  $ \vec f$ by $\vec f^*  = (f^*_1, f^*_2, \cdots, f^*_n)$.
The mixed $f$-divergence for $(\vec f, \vec{\textbf{P}}, \vec{\textbf{Q}})$ is defined in \cite{WernerYe2013} as 
\begin{equation}\label{def:mixed-fdiv}
D_{\vec f}(\vec{\textbf{P}}, \vec{\textbf{Q}} )=
 \int_{X} \prod_{i=1}^{n} \left[f_i \left(\frac{p_i}{q_i} \right) q_i\right]^{\frac{1}{n}} d\mu.
\end{equation}
\par
If   $f_i = f$, $P_i=P$, and $Q_i =Q$, for all $1 \leq i \leq n$, then
the mixed $f$-divergence becomes the classical $f$-divergence, defined in (\ref{def:fdiv2}).
\vskip 2mm
Similarly, the mixed $f$-divergence for $(\vec f, \vec{\textbf{Q}}, \vec{\textbf{P}})$ is
\begin{equation}\label{def:mixed-fdiv2}
D_{\vec f}(\vec{\textbf{Q}}, \vec{\textbf{P}} )=
 \int_{X} \prod_{i=1}^{n} \left[f_i \left(\frac{q_i}{p_i} \right) p_i\right]^{\frac{1}{n}} d\mu.
\end{equation}
It is obvious that $D_{\vec f}(\vec{\textbf{P}}, \vec{\textbf{Q}} )= D_{\vec f^*}(\vec{\textbf{Q}}, \vec{\textbf{P}} ).$  
Therefore, it is enough to consider $D_{\vec f}(\vec{\textbf{P}}, \vec{\textbf{Q}} )$,   which we will do throughout the paper.
\vskip 2mm
We now present some examples.  For more examples and properties, see \cite{WernerYe2013}.
\vskip 2mm
\noindent
\textbf{Examples.}
\par
\noindent
1. For $1 \leq i \leq  n$, let $f_i (t) = |t -1|$. Then  the mixed $f$-divergence becomes the {\em mixed total variation}  of $\vec P$ and $\vec Q$, defined by Werner and Ye in \cite{WernerYe2013},
\begin{equation}\label{mixedtotal}
  D_{\vec f}(\vec P, \vec Q)= \int_{X}   \prod_{i=1}^{n} | p_i - q_i |^{\frac{1}{n}} d\mu.
\end{equation}
\par
\noindent
2.  For $1 \leq i \leq  n$, let $f_i (t) =  \log t$. Then  the mixed $f$-divergence is {\em mixed Kullback-Leibler divergence} or the {\em mixed relative entropy} of $\vec P$ and $\vec Q$ \cite{WernerYe2013},
\begin{equation}\label{relent}
 D_{KL}(\vec P, \vec Q)=   D_{\vec f_{+}}(\vec P, \vec Q)= \int_{X}   \prod_{i=1}^{n} \left[ q_i \log \frac{p_i}{q_i} \right]_+^{\frac{1}{n}} d\mu
\end{equation}
where for $a \in \R^n$, $a_+ = ( \max \{a_1,0\}, \max \{a_2,0\}, \cdots, \max \{a_n,0\}  )$. Recall that   {\em Kullback-Leibler divergence} or {\em relative entropy} from $P$ to $Q$ is defined as (see, e.g., \cite{CoverThomas2006})
\begin{equation}\label{relent}
 D_{KL}(P\|Q)= \int_{X} q \log \frac{p}{q} d\mu.
\end{equation}

\subsection{ Mixed $f$-divergence for log concave functions}\label{Section-mixedlogcon}

A function $\varphi: \mathbb{R}^n \rightarrow [0, \infty)$ is log concave, if it is of the form $\varphi(x) = e^{-\psi(x)}$, where $\psi: \mathbb{R}^n \rightarrow \mathbb{R}$ is a convex function.
For $1 \leq i \leq n $, we put 
\begin{equation}\label{Q,P}
q_{\varphi_i}=\varphi_i  \hskip 4mm \text{and} \hskip 4mm   p_{\varphi_i}= \varphi_i^{-1}  e^{\frac{\langle\grad\varphi_i, x \rangle}{\varphi_i}} \mbox{det} \left[  \text{Hess} \left(-\log \varphi_i \right)\right].
\end{equation}
We use the expressions (\ref{Q,P}) to define the mixed  $f$-divergences for log concave  functions. 
These quantities  are the proper ones to use in order  to define divergences for log concave functions. This  was shown in \cite{CaglarWerner}. 

\vskip 2mm
\begin{defn} \label{mixed-f-div-log}
Let $f_i: (0, \infty) \rightarrow \mathbb{R}_+$ be   convex and/or  concave functions and let $ \varphi_i:\R^{n}\rightarrow [0, \infty)$ be  log concave functions.
Then the mixed $f$-divergence for $\vec{\varphi}= \left(\varphi_1, \cdots, \varphi_n\right)$  is
\begin{eqnarray}\label{mixed-div-Logconcave1}
D_{\vec f} \left(P_{\vec \varphi} , Q_{\vec \varphi} \right) &=& 
  D_{\vec f} \left((P_{\varphi_1}, \cdots,P_{\varphi_n}) , (Q_{\varphi_1}, \cdots, Q_{\varphi_n} ) \right) = \int \prod_{i=1}^{n} \left[f_i \left(\frac{p_{\varphi_i}}{q_{\varphi_i}} \right) q_{\varphi_i} \right]^{\frac{1}{n}} dx \nonumber \\
& = & {\displaystyle \int } \prod_{i=1}^{n} \left[ \varphi_i  \  f_i \left(
\frac{e^{\frac{\langle\grad\varphi_i, x \rangle}{\varphi_i}} }{\varphi_i^2} \  \mbox{det} \left[  \text{Hess} \left( -\log \varphi_i \right)\right] \right) \right]^{\frac{1}{n}} dx.
\end{eqnarray}
\end{defn}
\vskip 2mm
\noindent
{\bf Remarks and Examples} 
(i) If we let $ \ f_i =f \ $ and $ \ \varphi_i = \varphi $, $1 \leq i \leq n$,   then we obtain the usual $f$-divergence for log concave functions,  $D_f (P_{\varphi} , Q_{\varphi})$, defined in \cite{CaglarWerner},
\begin{equation}\label{div-Logconcave1}
D_f(P_\varphi, Q_\varphi)=
 \int \varphi  \ f  \left(
\frac{e^{\frac{\langle\grad\varphi, x \rangle}{\varphi}} }{\varphi ^2} \  \mbox{det} \left[  \text{Hess} \left( -\log \varphi \right)\right] \right)dx.
\end{equation}
Thus, Definition 1 extends the definition (\ref{div-Logconcave1}) of $f$-divergence for a log concave function of \cite{CaglarWerner} and consequently the inequalities  and identities given below generalize the ones given in\cite{CaglarWerner}. This is our  motivation for Definition 1.
\par
\noindent
(ii) Similarly to (\ref{mixed-div-Logconcave1}),
\begin{eqnarray*}
&&D_{\vec f} \left(Q_{\vec \varphi} , P_{\vec \varphi} \right) =
 D_{\vec f} \left((Q_{\varphi_1}, \cdots,Q_{\varphi_n}) , (P_{\varphi_1}, \cdots, P_{\varphi_n} ) \right) = \int   \prod_{i=1}^{n} \left[f_i \left(\frac{q_{\varphi_i}}{p_{\varphi_i}} \right) p_{\varphi_i} \right]^{\frac{1}{n}} dx \nonumber \\
&&= {\displaystyle \int }  \prod_{i=1}^{n} \left[ \varphi_i^{-1} e^{\frac{\langle\grad\varphi_i, x \rangle}{\varphi_i}} \ \mbox{det} \left[ -  \text{Hess} \left(\log \varphi_i \right)\right] \ f_i  \left( \frac{  \varphi_i^2  \  e^{- \frac{\langle\grad\varphi_i, x \rangle}{\varphi_i}}    }{
 \mbox{det} \left[ \text{Hess} \left(- \log \varphi_i \right)\right]   }\right)  \right]^{\frac{1}{n}} dx.
\end{eqnarray*}
\par
\noindent
(iii) If we write a log concave function as $\varphi= e^{-\psi}$, $\psi$ convex, then (\ref{mixed-div-Logconcave1})
becomes
\begin{eqnarray}\label{mixed-div-Logconcave3}
D_{\vec f} \left(P_{\vec \varphi} , Q_{\vec \varphi} \right) & =&
 \int  \prod_{i=1}^{n} \left[ e^{-\psi_i}  \ f_i  \left(
e^{2 \psi_i - \langle\grad\psi_i, x \rangle}\  \mbox{det} \left[  \text{Hess} \psi_i \right] \right) \right]^{\frac{1}{n}} dx.
\end{eqnarray}
\par
\noindent
(iv)  For  $1 \leq i \leq n $, let $A_i$ be a $(n\times n)$  positive definite matrix, $c_i>0$ a constant and let $ \  \varphi_i(x) = c_i e^{- \frac{1}{2} \langle A_ix, x \rangle}$. Then
\begin{equation}\label{exponential1}
D_{\vec f} \left(P_{\vec \varphi} , Q_{\vec \varphi} \right) = \  \frac{( 2 n \pi)^{\frac{n}{2}}}{ \big( \det( \sum_{i=1}^n A_i) \big)^{\frac{1}{2}}} \ \prod_{i=1}^{n} \left[  c_i \ f_i \left( \frac{\det(A_i) }{c_i^{2}} \right) \right]^{\frac{1}{n}}.
\end{equation}
In particular, if $A_i=A$ for all $i$, where $A$ is a $(n \times n)$  positive definite matrix, then (\ref{exponential1}) becomes
\begin{equation}\label{exponential2}
D_{\vec f} \left(P_{\vec \varphi} , Q_{\vec \varphi} \right)\ = \  \frac{(2 \pi)^{n/2}}{\sqrt{\det (A)}} \ \prod_{i=1}^{n} \left[ c_i\  f_i \bigg(  \frac{ \det (A)}{c_i^2} \bigg) \right]^{\frac{1}{n}}.
\end{equation}
\vskip 2mm
\begin{prop}\label{cor:mixed-f-div-SLn}
For $1 \leq i \leq n$, let $f_i: (0, \infty) \rightarrow \mathbb{R}_+$ be  convex and/or concave functions and 
let $ \varphi_i:\R^{n}\rightarrow [0, \infty)$ be  
log-concave functions.  
Then $D_{\vec f} \left(P_{\vec \varphi} , Q_{\vec \varphi} \right) $
is   invariant  under self adjoint $SL(n)$ maps.
\end{prop}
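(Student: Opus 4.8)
The plan is to reduce the statement to the elementary change-of-variables formula, working with form (\ref{mixed-div-Logconcave3}) of the mixed $f$-divergence. Let $T\in SL(d)$ be self adjoint, so that $T^*=T$ and $\det T=1$, and for each $i$ set $\varphi_i^T:=\varphi_i\circ T$; equivalently $\psi_i^T:=\psi_i\circ T$, where $\varphi_i=e^{-\psi_i}$ with $\psi_i$ convex. First I would record how the two differential building blocks transform under pre-composition with $T$. By the chain rule, $\nabla\psi_i^T(x)=T^*\,(\nabla\psi_i)(Tx)=T\,(\nabla\psi_i)(Tx)$ and $\text{Hess}\,\psi_i^T(x)=T^*\,(\text{Hess}\,\psi_i)(Tx)\,T=T\,(\text{Hess}\,\psi_i)(Tx)\,T$, the second equality in each case being exactly where self-adjointness enters.

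Next I would substitute these identities into the argument of $f_i$ appearing in (\ref{mixed-div-Logconcave3}). The Hessian determinant transforms as $\det[\text{Hess}\,\psi_i^T(x)]=(\det T)^2\,\det[(\text{Hess}\,\psi_i)(Tx)]=\det[(\text{Hess}\,\psi_i)(Tx)]$, since $\det T=1$. For the exponential factor, self-adjointness gives $\langle\nabla\psi_i^T(x),x\rangle=\langle T(\nabla\psi_i)(Tx),x\rangle=\langle(\nabla\psi_i)(Tx),Tx\rangle$, while $\psi_i^T(x)=\psi_i(Tx)$; hence $2\psi_i^T(x)-\langle\nabla\psi_i^T(x),x\rangle=2\psi_i(Tx)-\langle(\nabla\psi_i)(Tx),Tx\rangle$. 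Writing $g_i(y)$ for the argument of $f_i$ built from $\psi_i$ at the point $y$, these identities say precisely that the full argument built from $\psi_i^T$ at $x$ equals $g_i(Tx)$, and likewise the density weight satisfies $e^{-\psi_i^T(x)}=e^{-\psi_i(Tx)}$.

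Consequently the entire integrand of $D_{\vec f}(P_{\vec\varphi^T},Q_{\vec\varphi^T})$ at $x$ is the original integrand of $D_{\vec f}(P_{\vec\varphi},Q_{\vec\varphi})$ evaluated at $Tx$. The proof then concludes with the change of variables $y=Tx$, whose Jacobian has absolute value $|\det T|=1$, so that the value of the integral is unchanged. I expect the only delicate point to be the bookkeeping of the gradient and Hessian under composition; in particular, the hypothesis that $T$ be self adjoint is used in an essential way precisely to turn $\langle T(\nabla\psi_i)(Tx),x\rangle$ into $\langle(\nabla\psi_i)(Tx),Tx\rangle$. For a general $T\in SL(d)$ this inner-product term would instead produce $\langle(\nabla\psi_i)(Tx),T^*x\rangle$ and would fail to transform correctly, which is why only self-adjoint maps preserve the expression.
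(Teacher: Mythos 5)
Your proof is correct and is essentially the paper's own argument: both compute the integrand of the transformed divergence via the chain rule for the gradient and Hessian, observe that it equals the original integrand evaluated at $Tx$, and conclude with the change of variables $y=Tx$, whose Jacobian is $1$ (the paper merely keeps the factors $(\det T)^2$ inside $f_i$ and $1/|\det T|$ outside explicit before setting them equal to $1$).

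One caveat about your closing remark: the claim that self-adjointness is essential, and that the argument "would fail" for a general $T\in SL(d)$, is wrong. The chain rule gives $\nabla\psi_i^T(x)=T^*(\nabla\psi_i)(Tx)$, and for \emph{any} linear $T$ one has $\langle T^*(\nabla\psi_i)(Tx),x\rangle=\langle(\nabla\psi_i)(Tx),Tx\rangle$ directly from the definition of the adjoint; likewise $\det\left(T^*\,H\,T\right)=(\det T)^2\det H$ for any $T$. So the computation — yours and the paper's — in fact proves invariance under all of $SL(d)$, not only its self-adjoint elements. In your write-up the hypothesis $T^*=T$ is used once to replace $T^*$ by $T$ and then used again to move that $T$ back across the inner product, which is a circular detour rather than an essential step; the stated proposition is of course still correctly proved, since self-adjoint maps are a special case.
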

\begin{proof}
Let $A: \mathbb{R}^n \rightarrow \mathbb{R}^n $ be a self adjoint,  $SL(n)$ invariant  linear map.
\begin{eqnarray*}
&& D_{\vec f} \left((P_{\varphi_1\circ A}, \cdots,P_{\varphi_n\circ A}) , (Q_{\varphi_1\circ A}, \cdots, Q_{\varphi_n\circ A} ) \right) \\
&& = {\displaystyle \int }  \prod_{i=1}^{n} \left[ \varphi_i( Ax)  \  f_i \left(
\frac{e^{\frac{\langle\grad\varphi_i(Ax), x \rangle}{\varphi_i(Ax)}} }{(\varphi_i(Ax))^2} \  \mbox{det} \left[ \text{Hess} \left( -\log \varphi_i(Ax) \right)\right] \right) \right]^{\frac{1}{n}} dx \\
&& = \frac{1}{ | \det A| }  {\displaystyle \int }     \prod_{i=1}^{n} \left[ \varphi_i  \  f_i \left( (\det A)^2
\frac{e^{\frac{\langle\grad\varphi_i, x \rangle}{\varphi_i}} }{\varphi_i^2} \  \mbox{det} \left[  \text{Hess} \left( -\log \varphi_i \right)\right] \right) \right]^{\frac{1}{n}} dx \\
&& = D_{\vec f} \left((P_{\varphi_1}, \cdots,P_{\varphi_n}) , (Q_{\varphi_1}, \cdots, Q_{\varphi_n} ) \right) .
\end{eqnarray*}
\end{proof}
\
Recall that  for  a function $ \varphi :\R^{n}\rightarrow [0, \infty) $,  the dual function $\varphi^\circ$ \cite{ArtKlarMil}  is defined by  
$$ \ \varphi^\circ (y) = \inf_{x \in \R^n } \left[ \frac{e^{- \langle x,y \rangle}}{\varphi(x)}  \right].$$
If $ \varphi $ is a  log concave function, i.e.,  $\varphi (x)= e^{-\psi (x)}$ with $\psi : \R^{n} \rightarrow \R  $ convex, then 
this duality notion is connected with the Legendre transform 
$  \psi^* (y) = \sup_{x \in \R^n} \left[ \langle x,y \rangle  -\psi(x) \right] $, 
\begin{equation}\label{Polare}
 \varphi^\circ (y) =  e^{- \psi^* (y) }. 
\end{equation}
\vskip 2mm
For  special forms of the log concave functions $\varphi_i$ we have the following duality formula. This is the functional counterpart to the one proved in \cite{Ye2012}  for convex bodies and for special $f$.
\vskip 2mm

\begin{theo}\label{f-duality}
For $1 \leq i \leq n$, let  $f_i:(0,  \infty) \rightarrow \mathbb{R}_+$ be  convex and/or concave  functions and let
$\varphi_i =  \lambda_i \varphi$,  for some log concave function $ \varphi :\R^{n}\rightarrow [0, \infty)$ and $\lambda_i >0$. Then
\begin{equation}\label{mixed-polar-identity} 
 D_{\vec f} \left(P_{\vec{ \varphi^\circ}},  Q_{\vec{ \varphi^\circ}}\right) = D_{\vec f^*} \big(P_{\vec\varphi},  Q_{\vec\varphi} \big) .
\end{equation}
\end{theo}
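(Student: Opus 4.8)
\emph{Proof proposal.} The plan is to reduce the claimed duality to the elementary symmetry of the mixed $f$-divergence under interchange of its two arguments, and then to evaluate the left-hand side by the gradient change of variables furnished by the Legendre transform. Applying the identity $D_{\vec f}(\vec P,\vec Q)=D_{\vec f^*}(\vec Q,\vec P)$ (stated just after (\ref{def:mixed-fdiv2})) with $\vec f$ replaced by $\vec f^*$ and using $(\vec f^*)^*=\vec f$ gives $D_{\vec f^*}(P_{\vec\varphi},Q_{\vec\varphi})=D_{\vec f}(Q_{\vec\varphi},P_{\vec\varphi})$. Thus it suffices to prove $D_{\vec f}(P_{\vec{\varphi^\circ}},Q_{\vec{\varphi^\circ}})=D_{\vec f}(Q_{\vec\varphi},P_{\vec\varphi})$, where the right-hand side already has the explicit integral representation recorded in Remark (i).

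Next I would set up the substitution. Write $\varphi=e^{-\psi}$ with $\psi$ convex and $C^2$. Since $\varphi_i=\lambda_i\varphi$, one has $\varphi_i=e^{-\psi_i}$ with $\psi_i=\psi-\log\lambda_i$, so that $\nabla\psi_i=\nabla\psi$ and $\text{Hess}\,\psi_i=\text{Hess}\,\psi$ are the same for every $i$. Dualizing through (\ref{Polare}) gives $\psi_i^*=\psi^*+\log\lambda_i$, hence $\varphi_i^\circ=\lambda_i^{-1}\varphi^\circ=e^{-\psi_i^*}$, again with a common gradient map $\nabla\psi_i^*=\nabla\psi^*$ and common Hessian. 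This common gradient map is exactly what makes the hypothesis $\varphi_i=\lambda_i\varphi$ usable: a single change of variables can serve all $n$ factors simultaneously.

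I would then apply to formula (\ref{mixed-div-Logconcave3}), written for the vector $\vec{\varphi^\circ}$, the change of variables $y=\nabla\psi(x)$, equivalently $x=\nabla\psi^*(y)$, whose Jacobian is $dy=\det[\text{Hess}\,\psi(x)]\,dx$. Using the standard Legendre identities $\psi^*(\nabla\psi(x))=\langle x,\nabla\psi(x)\rangle-\psi(x)$, $\nabla\psi^*(\nabla\psi(x))=x$, and $\text{Hess}\,\psi^*(\nabla\psi(x))=[\text{Hess}\,\psi(x)]^{-1}$, a direct substitution shows that the argument of each $f_i$ is converted into the reciprocal of the argument appearing in the Remark (i) formula, with the constants matching on the two sides ($\lambda_i^2$ inside the argument of $f_i$ and $\lambda_i^{-1}$ in the density prefactor). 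Folding the Jacobian into the product as $\det[\text{Hess}\,\psi]=\prod_{i=1}^n(\det[\text{Hess}\,\psi])^{1/n}$ then makes the integrand agree term by term with that of $D_{\vec f}(Q_{\vec\varphi},P_{\vec\varphi})$, which closes the argument.

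The main obstacle is the justification and bookkeeping of the change of variables rather than any single hard estimate. One must invoke the standing $C^2$ assumption together with the positive definiteness of $\text{Hess}\,\psi$ to guarantee that $\nabla\psi$ is a diffeomorphism onto its image with $\psi^{**}=\psi$, verify the Hessian inversion identity along the graph of $\nabla\psi$, and keep careful track of the $\lambda_i$ factors and of the domains so that support and boundary contributions do not interfere. I expect the genuinely essential point to be the observation that $\varphi_i=\lambda_i\varphi$ forces the common gradient map $\nabla\psi_i=\nabla\psi$; for arbitrary distinct $\varphi_i$ the several factors would demand mutually incompatible substitutions and a single change of variables would not succeed.
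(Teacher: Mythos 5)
Your proposal is correct and is essentially the paper's own argument: the paper likewise writes $\varphi=e^{-\psi}$, invokes the Legendre identities (\ref{dualitybis}), (\ref{dualitygrad}), (\ref{dualityhess}), uses $(\lambda\varphi)^\circ=\varphi^\circ/\lambda$, and performs the same gradient change of variables, the only difference being organizational — the paper starts from $D_{\vec f^*}\big(P_{\vec\varphi},Q_{\vec\varphi}\big)$, expands $f_i^*(t)=t f_i(1/t)$ pointwise inside the integral (which is exactly the content of the symmetry $D_{\vec f^*}(\vec{\textbf{P}},\vec{\textbf{Q}})=D_{\vec f}(\vec{\textbf{Q}},\vec{\textbf{P}})$ you invoke upfront), and substitutes $x=\nabla\psi^*(y)$, whereas you invoke the symmetry abstractly and substitute $y=\nabla\psi(x)$ in the $\varphi^\circ$-integral. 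These are the same computation run in opposite directions through the same diffeomorphism, so nothing of substance differs.
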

\vskip 2mm
\begin{proof}
We write  $ \varphi = e^{- \psi}$, $\psi$ convex,  and let  $\psi^* (y)$ be the Legendre transform of $\psi$.
Please note that when $\psi$ is a $C^2$ strictly convex function, then 
$$
\psi(x) + \psi^* (y) = \langle x, y \rangle
\hbox{ if and only if }
y = \nabla \psi(x)
\hbox{ if and only if }
x = \nabla \psi^* (y).
$$
It follows that 
\begin{equation}
\label{dualitybis}
\forall y \in \R^n, \ \psi(\nabla \psi^* (y)) = \langle y , \nabla \psi^* (y) \rangle -  \psi^* (y)
\end{equation}
and
\begin{equation}
\label{dualitygrad}
\nabla \psi \circ \nabla \psi^*  = \nabla \psi^* \circ \nabla \psi = {\text Id}, 
\end{equation}
so that for any $x,y \in \R^n$,
\begin{equation}
\label{dualityhess}
\text{Hess} \, \psi (\nabla \psi^* (y) ) \ \text{Hess} \, \psi^* (y) = {\text Id} = \text{Hess} \, \psi^*(\nabla \psi(x)) \ \text{Hess} \,  \psi(x).
\end{equation}
Using equations (\ref{dualitybis}),  (\ref{dualitygrad}) and (\ref{dualityhess}), the change of variable $x=\nabla \psi^* (y) $ gives
\begin{eqnarray*}
&& \hskip -0.7cm  D_{\vec f^*} \left(P_{\vec\varphi},  Q_{\vec\varphi} \right) \\ 
 &=&  {\displaystyle \int }  \prod_{i=1}^{n} \left[ \varphi_i  \  f_i^* \left(
\frac{e^{\frac{\langle\grad\varphi_i, x \rangle}{\varphi_i}} }{\varphi_i^2} \  \mbox{det} \left[  \text{Hess} \left( -\log \varphi_i \right)\right] \right) \right]^{\frac{1}{n}} dx \\
&=& {\displaystyle \int }  \prod_{i=1}^{n} \left[ \frac{  e^{\frac{\langle\grad\varphi, x \rangle}{\varphi}} \   \mbox{det} \left[  \text{Hess} \left( -\log \varphi \right)\right]}{\lambda_i \varphi } f_i \left( \frac{ \lambda_i^2 \varphi^2   e^{- \frac{\langle\grad\varphi, x \rangle}{\varphi}} }{  \mbox{det} \left[  \text{Hess} \left( -\log \varphi \right) \right]} \right)\right]^{\frac{1}{n}} dx \\
&=& \frac{1}{ \left(\lambda_1 \cdots \lambda_n \right)^{\frac{1}{n}} } \ {\displaystyle \int }  \prod_{i=1}^{n} \left[  \det \left( \text{Hess} \psi (x)\right)  \  e^{ \psi (x) - \langle \nabla \psi , x\rangle}  f_i \left( \frac{ \lambda_i^2 \ e^{ -2  \psi (x) +  \langle \nabla \psi, x\rangle} }{ \det\left(\text{Hess} \psi (x)\right)  }  \right)  \right]^{\frac{1}{n}}   \hskip -3mm dx \\
&=& \frac{1}{ \left(\lambda_1 \cdots \lambda_n \right)^{\frac{1}{n}} } \ {\displaystyle \int}  \prod_{i=1}^{n} \left[ \det\left( \text{Hess} \psi ( \nabla \psi^* (y))\right)   \  e^{ (\psi ( \nabla \psi^* (y) ) - \langle y, \nabla ( \psi^* (y) )  \rangle ) }  \right]^{\frac{1}{n}} \\
&& \hskip 2.5cm  \times \prod_{i=1}^{n} \left[  f_i \left( \frac{ \lambda_i^2 \ e^{ -2  \psi( \nabla \psi^* (y) ) +  \langle y, \nabla \psi^* (y) \rangle} }{  \det\left(\text{Hess} \psi ( \nabla \psi^* (y) )\right)}\right) \right]^{\frac{1}{n}} \   \det\left(\text{Hess} \psi^* (y)  \right)  dy  \\
&=& \frac{1}{ \left(\lambda_1 \cdots \lambda_n \right)^{\frac{1}{n}} } \ {\displaystyle \int } \prod_{i=1}^{n} \left[  e^{ -  \psi^* (y) }   \  f_i \left( \lambda_i^{2} \ \det(\text{Hess} \psi^* (y)  ) \ e^{ -  \langle y , \nabla \psi^* (y) \rangle +  2 \psi^* (y) }  \right)  \right]^{\frac{1}{n}} dy  \\
&=& \frac{1}{ \left(\lambda_1 \cdots \lambda_n \right)^{\frac{1}{n}} } \ {\displaystyle \int}  \prod_{i=1}^{n} \left[  \varphi^\circ \ f_i \left( \lambda_i^{2} \ \mbox{det}\left[ \text{Hess} \left(-\log  \varphi^\circ  \right) \right] \ \frac{
 e^{ \frac{\langle \nabla \varphi^\circ, x\rangle}{ \varphi^\circ}}  }{ (\varphi^\circ)^{2  }}   \right) \right]^{\frac{1}{n}}  dx \\
&=& D_{\vec f} \left(P_{\vec{ \varphi^\circ}},  Q_{\vec{ \varphi^\circ}}\right).
\end{eqnarray*}
The last part follows from the fact that $ (\lambda \varphi)^\circ = \frac{\varphi^\circ}{\lambda}  $,
for $\lambda \in \R$, $\lambda \ne 0$.
\end{proof}
\par
\noindent
\textbf{Remark.}
 If $f_i = f$ and  $\lambda_i = 1$,  i.e. $\varphi_i = \varphi $ for all $i =1, \cdots, n$, then  $D_f (P_{\varphi^\circ} , Q_{\varphi^\circ}) = D_{f^*} (P_{\varphi} , Q_{\varphi}) $. This was proved in \cite{CaglarWerner}.

\vskip 3mm
The classical Alexandrov-Fenchel inequality for mixed volumes of convex bodies is one of the most important results in convex geometry.
We refer to e.g., \cite{Sch} for the details and prove  now an Alexandrov-Fenchel type inequality
for mixed $f$-divergences for log concave functions. The proof is similar to  one given in \cite{WernerYe2013}. We include it for completeness.
We use the following notations. 
\par
For  $1 \leq m \leq  n-1$  and $k > n-m$, we put 
$$
 \vec f_{m,k}  = (f_1, f_2, \cdots , f_{n-m}, \underbrace{f_{k}, \cdots , f_{k}}_m), 
 $$ 
$$
P_{\vec \varphi_{m,k}}=   (P_{\varphi_1},  \cdots ,P_{\varphi_{n-m}}, \underbrace{P_{\varphi_{k}}, \cdots , P_{\varphi_{k}}}_m), \hskip 3mm 
 Q_{\vec \varphi_{m,k}}=   (Q_{\varphi_1},  \cdots ,Q_{\varphi_{n-m}}, \underbrace{Q_{\varphi_{k}}, \cdots , Q_{\varphi_{k}}}_m).
$$
\par
Following  \cite{HardyLittlewoodPolya}, we say that two functions $f$ and $g$ are {\em effectively proportional}  if there are 
constants $a$ and $b$, not both zero, such that $af=bg$. Functions $f_1, \dots, f_m$ are effectively proportional if every pair $(f_i, f_j), 1\leq i, j\leq m$ is  effectively proportional.  A null function is effectively proportional to any function.
\par
Moreover, for  $1 \leq  m \leq n-1$,  we let
\begin{equation}\label{h0}
h_0(x) = \prod_{i=1}^{n-m} \left[ \varphi_i  \  f_i \left(
\frac{e^{\frac{\langle\grad\varphi_i, x \rangle}{\varphi_i}} }{\varphi_i^2} \  \mbox{det} \left[  \text{Hess} \left( -\log \varphi_i \right)\right] \right) \right]^{\frac{1}{n}}
\end{equation}
and for $j = 0, \cdots, m-1$,
\begin{equation} \label{hj}
h_{j+1}(x) =  \left[ \varphi_{n-j}  \  f_{n-j} \left(
\frac{e^{\frac{\langle\grad\varphi_{n-j}, x \rangle}{\varphi_{n-j}}} }{\varphi_{n-j}^2} \  \mbox{det} \left[  \text{Hess} \left( -\log \varphi_{n-j} \right)\right] \right) \right]^{\frac{1}{n}}.
\end{equation}
\vskip 3mm
Then an Alexandrov-Fenchel type inequality holds for log concave functions, namely,
\vskip 2mm
\begin{theo}\label{Alexandrov-Fenchel}
For $1 \leq i \leq n$,  let $f_i: (0, \infty) \rightarrow \mathbb{R}_+$ be  either all convex or all concave functions and 
let $ \varphi_i:\R^{n}\rightarrow [0, \infty)$ be  
log-concave functions.  Then, for $ 1 \leq m \leq n-1$,
\begin{eqnarray*}
 \left[ D_{\vec f} \left( P_{\vec \varphi}, Q_{\vec \varphi} \right) \right]^m  \leq  
 \prod_{k=n-m+1}^{ n }  D_{ \vec f_{m,k }}  \left( P_{\vec \varphi_{m,k}},  Q_{\vec \varphi_{m,k}}  \right).
\end{eqnarray*}
Equality holds if and only if one of the functions $h_0 ^\frac{1}{m} h_j$, $1 \leq j \leq m$,   is null or all are effectively proportional.
\par
If $m=n$, then
\begin{eqnarray*}
 \left[ D_{\vec f} \left( P_{\vec \varphi}, Q_{\vec \varphi} \right) \right]^n  \leq  
 \prod_{k=1}^{ n }  D_{  f_{k }}  \left( P_{ \varphi_{k}},  Q_{ \varphi_{k}}  \right).
\end{eqnarray*}
Equality holds if and only if one of the functions $h_j$, $1 \leq j \leq n$,   is null or all are effectively proportional.
\end{theo}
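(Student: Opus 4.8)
The plan is to recognize the asserted inequality as a disguised instance of the generalized Hölder inequality, after an appropriate factorization of the integrand; the equality case then follows from the classical equality characterization for Hölder.

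First I would introduce the shorthand
\[
g_i(x) = \varphi_i \, f_i\!\left( \frac{e^{\langle\grad\varphi_i, x\rangle/\varphi_i}}{\varphi_i^2}\, \det\!\left[\text{Hess}\left(-\log\varphi_i\right)\right] \right),
\]
so that the integrand of $D_{\vec f}(P_{\vec \varphi}, Q_{\vec \varphi})$ is $\prod_{i=1}^n g_i^{1/n}$. Comparing with (\ref{h0}) and (\ref{hj}), where $h_0 = \prod_{i=1}^{n-m} g_i^{1/n}$ and $h_{j+1} = g_{n-j}^{1/n}$ for $0 \le j \le m-1$, one sees that this integrand is precisely $h_0 h_1 \cdots h_m$. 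The key bookkeeping step is to identify each factor on the right-hand side. Unwinding the definitions of $\vec f_{m,k}$, $P_{\vec \varphi_{m,k}}$ and $Q_{\vec \varphi_{m,k}}$, the integrand of $D_{\vec f_{m,k}}(P_{\vec \varphi_{m,k}}, Q_{\vec \varphi_{m,k}})$ consists of the first $n-m$ factors $g_1^{1/n}, \dots, g_{n-m}^{1/n}$ (whose product is $h_0$) together with $m$ identical copies of $g_k^{1/n}$, giving $h_0\, g_k^{m/n}$. Writing $k = n-j$ turns $g_k^{m/n}$ into $h_{j+1}^m$, so that
\[
D_{\vec f_{m,k}}\!\left(P_{\vec \varphi_{m,k}}, Q_{\vec \varphi_{m,k}}\right) = \int h_0\, h_{j+1}^{m}\, dx = \int \left(h_0^{1/m} h_{j+1}\right)^{m} dx,
\]
and as $k$ runs from $n-m+1$ to $n$, the index $j$ runs from $0$ to $m-1$.

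Setting $w_j := h_0^{1/m} h_j$ for $1 \le j \le m$, and noting $\prod_{j=1}^m w_j = h_0 h_1 \cdots h_m$, the claim becomes
\[
\left[\int \prod_{j=1}^m w_j \, dx\right]^{m} \le \prod_{j=1}^m \int w_j^{m}\, dx,
\]
which is exactly the generalized Hölder inequality with all $m$ exponents equal to $m$. The only hypothesis it requires is that the $w_j$ be nonnegative; this holds because each $f_i$ maps into $\mathbb{R}_+$ and each $\varphi_i \ge 0$, so the assumption that the $f_i$ are all convex or all concave is needed only to make the divergences well defined through (\ref{assume2}). The case $m=n$ is the same computation with $h_0$ an empty product, hence $h_0 \equiv 1$ and $w_j = h_j$; the right-hand factors then collapse to the ordinary $f$-divergences, $D_{f_k}(P_{\varphi_k}, Q_{\varphi_k}) = \int g_k\, dx = \int h_j^{\,n}\, dx$ with $k = n-j$.

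For the equality characterization I would quote the equality case of Hölder's inequality in the form recorded in \cite{HardyLittlewoodPolya}: equality holds if and only if the functions $w_j^m$ are effectively proportional, which for nonnegative functions is the same as the $w_j = h_0^{1/m} h_j$ being effectively proportional, the degenerate possibility being that one of them is null. This is exactly the stated condition, specializing for $m=n$ to effective proportionality of the $h_j$ themselves. I expect the only real difficulty to be the indexing: one must track the substitution $k = n-j$ relating the product over $k$ to the factors $h_{j+1}$, and check that the hidden exponent $1/n$ in each $h_j$ combines with the multiplicity $m$ to yield exactly $g_k^{m/n} = h_{j+1}^m$. Once this factorization is in place, the analytic content reduces entirely to the standard Hölder inequality.
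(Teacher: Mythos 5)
Your proof is correct and takes essentially the same route as the paper: both reduce each case to H\"older's inequality applied to the factorization of the integrand into the functions $h_0^{1/m}h_{j+1}$, the paper writing $\left[D_{\vec f}\right]^m=\left(\int\prod_{j=0}^{m-1}\left(h_0\,h_{j+1}^m\right)^{1/m}dx\right)^m\le\prod_{j=0}^{m-1}\int h_0\,h_{j+1}^m\,dx$ and quoting the equality characterization from \cite{HardyLittlewoodPolya}, exactly as you do. The only blemish is a harmless off-by-one in your closing remark for $m=n$: since $h_{j+1}=g_{n-j}^{1/n}$, with $k=n-j$ one has $\int g_k\,dx=\int h_{j+1}^{\,n}\,dx$ (equivalently $g_k=h_{n-k+1}^{\,n}$), not $\int h_j^{\,n}\,dx$.
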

\vskip 2mm 
\noindent
\textbf{Remark.}
In particular, equality holds in Theorem \ref{Alexandrov-Fenchel},  if
(i)  all $\varphi_i$ coincide and $f_i = \lambda_i f$ for some positive convex function $f$ and $\lambda_i >0$, $i = n-m +1, \cdots ,n $, or 
(ii) $f_i = \lambda_i f$, for some positive convex function $f$, for some $ \lambda_i >0$, $\varphi_i = a_i \varphi$, for some positive,  log concave function $\varphi$,
 for some $a_i$, $i = n-m +1, \cdots ,n $ and $f$ is homogeneous of degree $ \alpha \in [0,1)$.
\vskip 2mm
\begin{proof}
We first treat the case $m=n$. By  H\"older's inequality,  e.g.,  \cite{HardyLittlewoodPolya}, 
\begin{eqnarray*}
  D_{\vec f} \left( P_{\vec \varphi}, Q_{\vec \varphi} \right)  &=&  {\displaystyle \int }   \prod_{i=1}^{n} \left[ \varphi_i  \  f_i \left(
\frac{e^{\frac{\langle\grad\varphi_i, x \rangle}{\varphi_i}} }{\varphi_i^2} \  \mbox{det} \left[  \text{Hess} \left( -\log \varphi_i \right)\right] \right) \right]^{\frac{1}{n}} dx\\
&\leq& \prod_{i=1}^{n}  \   \left[ {\displaystyle \int }   \varphi_i  \  f_i \left(
\frac{e^{\frac{\langle\grad\varphi_i, x \rangle}{\varphi_i}} }{\varphi_i^2} \  \mbox{det} \left( \text{Hess} \left( -\log \varphi_i \right) \right)\right) \right]^{\frac{1}{n}} dx\\
&= &
 \prod_{i=1}^{ n} \left( D_{  f_i }  \left( P_{\varphi_i} , Q_{\varphi_i}  \right)\right)^\frac{1}{n} .
\end{eqnarray*}
\par
Let now $ m \leq n-1$. 
Again, by H\"older's inequality, 
\begin{eqnarray*}
  \left[ D_{\vec f} \left( P_{\vec \varphi}, Q_{\vec \varphi} \right) \right]  ^m  &=& \left( \int \prod_{j=0}^{m-1} \left( h_0(x) h_{j+1}^m(x)\right)^\frac{1}{m} dx \right)^m \\
 &\leq & \prod_{j=0}^{m-1} \left( \int  h_0(x)  h_{j+1}^m (x) \right) dx 
 \ = \  \prod_{k=n-m+1}^{ n}  D_{ \vec f_{m,k }}  \left( P_{\vec \varphi_{m,k}},  Q_{\vec \varphi_{m,k}}  \right).
\end{eqnarray*}
In both cases, characterization of equality follows from the equality characterization in H\"older's inequality e.g.,  \cite{HardyLittlewoodPolya}.
\end{proof}

\vskip 3mm
The  following  entropy inequality is a consequence of Theorem \ref{Alexandrov-Fenchel}.
\vskip 2mm
\noindent
\begin{theo}\label{mixed-entropy}
For $1 \leq i \leq n$, let $f_i: (0, \infty) \rightarrow \mathbb{R}_+$ be concave functions and 
 let $ \varphi_i:\R^{n}\rightarrow [0, \infty)$ be log-concave functions.
Then
\begin{eqnarray}\label{ineq:mixed}
 \left[ D_{\vec f} \left(P_{\vec \varphi} , Q_{\vec \varphi} \right)  \right]^n  \ \leq \  
 \prod_{i=1}^{ n}  \ f_i \left(  \frac{\int \varphi_i^\circ dx}{ \int \varphi_i dx}   \right)
 \  \left( \int_{} \varphi_i  dx \right) .
\end{eqnarray}
Equality holds  if  and only if
 $\varphi_i(x)=c_i e^{-\frac{1}{2} \langle A x, x \rangle}$  where  $c_i$  is a positive constant and $A$ is a $(n \times n)$  positive definite  matrix.
 \end{theo}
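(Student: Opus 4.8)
The plan is to derive the inequality from the case $m=n$ of Theorem \ref{Alexandrov-Fenchel} and then control each single-function factor by Jensen's inequality. By the $m=n$ case we already have
\begin{equation*}
\left[ D_{\vec f}(P_{\vec\varphi}, Q_{\vec\varphi})\right]^n \leq \prod_{i=1}^n D_{f_i}(P_{\varphi_i}, Q_{\varphi_i}),
\end{equation*}
so it suffices to show, for each $i$, that $D_{f_i}(P_{\varphi_i}, Q_{\varphi_i}) \leq f_i\!\left(\frac{\int\varphi_i^\circ\,dx}{\int\varphi_i\,dx}\right)\int\varphi_i\,dx$. Writing $g_i = \frac{e^{\langle\grad\varphi_i, x\rangle/\varphi_i}}{\varphi_i^2}\det\left[\text{Hess}(-\log\varphi_i)\right]$, so that $D_{f_i}(P_{\varphi_i}, Q_{\varphi_i}) = \int f_i(g_i)\,\varphi_i\,dx$, I would introduce the probability measure $d\mu_i = \varphi_i\,dx/\int\varphi_i\,dx$. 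Since $f_i$ is concave, Jensen's inequality gives $\int f_i(g_i)\,d\mu_i \leq f_i\!\left(\int g_i\,d\mu_i\right)$, that is,
\begin{equation*}
D_{f_i}(P_{\varphi_i}, Q_{\varphi_i}) \leq \left(\int\varphi_i\,dx\right) f_i\!\left(\frac{\int g_i\,\varphi_i\,dx}{\int\varphi_i\,dx}\right).
\end{equation*}

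The key remaining ingredient is the identity $\int g_i\,\varphi_i\,dx = \int p_{\varphi_i}\,dx = \int\varphi_i^\circ\,dx$. Writing $\varphi_i = e^{-\psi_i}$ with $\psi_i$ convex, one has $g_i\,\varphi_i = p_{\varphi_i} = e^{\psi_i - \langle\grad\psi_i, x\rangle}\det[\text{Hess}\,\psi_i]$. The change of variable $y = \grad\psi_i(x)$, whose Jacobian satisfies $\det[\text{Hess}\,\psi_i]\,dx = dy$, together with the Legendre identity $\psi_i(x) - \langle\grad\psi_i(x), x\rangle = -\psi_i^*(\grad\psi_i(x))$ from (\ref{dualitybis}), turns $\int p_{\varphi_i}\,dx$ into $\int e^{-\psi_i^*(y)}\,dy = \int\varphi_i^\circ(y)\,dy$, using $\varphi_i^\circ = e^{-\psi_i^*}$ from (\ref{Polare}). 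Substituting this identity into the Jensen bound and taking the product over $i$ yields the claimed inequality.

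For the equality characterization I expect the genuine difficulty to lie. Equality in the whole chain forces equality both in the H\"older/Alexandrov-Fenchel step and in each Jensen step. For strictly concave $f_i$, Jensen is an equality exactly when $g_i$ is constant $\mu_i$-almost everywhere, which in terms of $\psi_i$ is the Monge-Amp\`ere type equation $\det[\text{Hess}\,\psi_i]\,e^{2\psi_i - \langle\grad\psi_i, x\rangle} = \text{const}$. The hard part is to show that its only log concave solutions are the Gaussians $\varphi_i(x) = c_i e^{-\frac{1}{2}\langle A_i x, x\rangle}$; for this I would invoke the uniqueness result for such equations from \cite{Werner-Yolcu}, as announced in the introduction. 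Once each $\varphi_i$ is known to be such a Gaussian, $g_i$ equals the constant $\det A_i/c_i^2$, so each factor $h_j$ of Theorem \ref{Alexandrov-Fenchel} is a constant multiple of $e^{-\frac{1}{2n}\langle A_i x, x\rangle}$; the effective proportionality forced by equality in Alexandrov-Fenchel then requires $\langle A_i x, x\rangle = \langle A_j x, x\rangle$ for all $x$, hence $A_i = A$ for a common positive definite $A$. Conversely, for such Gaussians $g_i$ is constant, so both Jensen and H\"older are equalities, and the explicit evaluation behind Remark (iv) with a common $A$ confirms that the two sides coincide.
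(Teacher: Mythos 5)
Your proposal is correct and follows the same skeleton as the paper's proof: reduce via the $m=n$ case of Theorem \ref{Alexandrov-Fenchel}, bound each factor $D_{f_i}(P_{\varphi_i},Q_{\varphi_i})$ by $f_i\bigl(\int\varphi_i^\circ dx/\int\varphi_i dx\bigr)\int\varphi_i dx$, and obtain the equality case by combining the rigidity of the single-function bound with the rigidity of the H\"older step. You deviate in two places. First, where the paper simply cites Theorem 1 of \cite{CaglarWerner} for the single-function inequality (\ref{thm0,1}), you re-derive it: Jensen's inequality for the probability measure $\varphi_i\,dx/\int\varphi_i\,dx$ together with the identity $\int p_{\varphi_i}\,dx=\int\varphi_i^\circ\,dx$, proved by the change of variables $y=\nabla\psi_i(x)$ plus (\ref{dualitybis}) and (\ref{Polare}). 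This is precisely the content of the cited result, so your argument is self-contained where the paper's is not, at the cost of needing the standing smoothness assumptions to guarantee that $\nabla\psi_i$ covers the domain of $\psi_i^*$ up to a null set. Second, in the equality characterization both arguments invoke \cite{Werner-Yolcu} to force each $\varphi_i$ to be a Gaussian $c_ie^{-\frac12\langle A_ix,x\rangle}$, but then diverge: the paper evaluates both sides by (\ref{exponential1}), obtains the determinant identity (\ref{gleich}), and deduces a common matrix $A$ from the matrix Brunn--Minkowski inequality combined with the arithmetic-geometric mean inequality, i.e.\ (\ref{BM+GA}); you instead use the effective-proportionality condition in the equality case of H\"older, which for Gaussian factors $h_j$ forces $\langle A_ix,x\rangle=\langle A_jx,x\rangle$ for all $x$, hence $A_i=A_j$, directly. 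Your route is more economical, avoiding matrix Brunn--Minkowski altogether, whereas the paper's computation (\ref{BM+GA}) is machinery it reuses later (in Corollary \ref{Korrolar2} and in Section 4). One further point in your favor: you correctly flag that the Jensen step is rigid only for strictly concave $f_i$. The theorem as stated assumes mere concavity, and for affine $f_i$ (e.g.\ $f_i(t)=t$ with all $\varphi_i$ equal to an arbitrary log concave $\varphi$) equality in (\ref{ineq:mixed}) holds without $\varphi$ being Gaussian; so the ``only if'' direction genuinely requires a hypothesis of this kind, which the paper leaves implicit in its citation of the equality characterization from \cite{Werner-Yolcu}.
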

\vskip 2mm
\begin{proof}
The inequality follows immediately from Theorem \ref{Alexandrov-Fenchel} for $m=n$  and Theorem 1 in \cite{CaglarWerner}, which says  that for a concave function  $f:(0, \infty) \rightarrow \mathbb{R}$ and 
a log-concave function $ \varphi:\R^{n}\rightarrow [0, \infty)$, we have 
\begin{eqnarray}\label{thm0,1}  
D_f(P_\varphi, Q_\varphi)  \leq \ f \left(  \frac{\int \varphi^\circ dx}{ \int \varphi dx}   \right)
 \  \left( \int_{} \varphi  dx \right).
 \end{eqnarray}
It was proved  in  \cite{Werner-Yolcu}, that equality  holds in (\ref{thm0,1}) if and only if $\varphi(x)=c e^{-\frac{1}{2} \langle A x, x \rangle}$ where $c>0$ is a constant and $A$ is a $(n \times n)$  positive definite matrix.
\par
We now treat the equality characterization. 
Using (\ref{exponential2}),  it is easy to check that equality holds if  $\varphi_i(x)=c_i e^{-\frac{1}{2} \langle A x, x \rangle}$, $1 \leq i \leq n$.
On the other hand, if equality holds in (\ref{ineq:mixed}), then in particular, 
\begin{equation*}
 \prod_{i=1}^{ n} D_{ f_i} \left(P_{ \varphi_i} , Q_{\varphi_i} \right) =
 \prod_{i=1}^{ n}  \ f_i \left(  \frac{\int \varphi_i^\circ dx}{ \int \varphi_i dx}   \right)
 \  \left( \int_{} \varphi_i  dx \right) .
 \end{equation*}
Thus,  equality holds in particular for all $i$ in the entropy inequality (\ref{thm0,1}),  which,  by the equality characterization of \cite{Werner-Yolcu},  means that 
for all $i$, $\varphi_i(x) =  c_i e^{-\frac{1}{2} \langle A_i x, x \rangle}$, where
 $c_i$ is a positive constant  and $A_i$ is a $(n \times n)$  positive definite  matrix. Thus, also using  (\ref{exponential1}),  the equality condition leads to the following identity
 \begin{equation}\label{gleich}
  \prod_{i=1}^{ n}  \left(  \det (A_i ) \right) ^\frac{1}{n} = \frac{\det \left(\sum_{i=1}^n A_i\right) }{n^n}.
\end{equation} 
The Brunn Minkowski inequality for matrices \cite{CoverThomas2006, KyFan, MINKOWSKI} says that for positive definite matrices $A_i$, $1 \leq i \leq n$,  one has
 \begin{equation}\label{BMI}
 \left(  \det \left( \sum_{i=1}^n A_i \right) \right) ^\frac{1}{n} \geq \sum_{i=1}^n \left(\det(A_i\right)^\frac{1}{n},
\end{equation} 
with equality  if and only if  all  $A_i = \lambda _i A$ for some positive definite matrix $A$ and scalars $\lambda_i \geq 0$.
It follows from the geometric arithmetic mean inequality that 
\begin{equation*}
\left(\sum_{i=1}^n \left(\det(A_i)\right)^\frac{1}{n}\right)^n \geq n^n  \prod_{i=1}^{ n}  \left(  \det (A_i ) \right) ^\frac{1}{n},
\end{equation*}
with equality if and only if $\det(A_i) = \det(A_j)$, for all $i,j$.
With (\ref{BMI}),  we get altogether, 
\begin{equation} \label{BM+GA}
\prod_{i=1}^{ n}  \left(  \det (A_i ) \right) ^\frac{1}{n} \leq \frac{1}{n^n} \   \left(\sum_{i=1}^n \left(\det(A_i)\right)^\frac{1}{n}\right)^n \leq   \frac{1}{n^n} \   \left(  \det \left( \sum_{i=1}^n A_i \right) \right).
\end{equation}
By assumption, equality  (\ref{gleich}) holds. Therefore, we have equality in both, the geometric arithmetic mean inequality and the Brunn Minkowski inequality which means that for all 
$i$, $A_i= \lambda A$, for some $\lambda>0$, for some positive definite matrix $A$. Hence we have that $\varphi_i(x) = c_i e^{ - \langle A x , x \rangle / 2  }$.
\end{proof}
If we let $f_i(t) = \log t$, $1 \leq i \leq n$,  in Theorem \ref{mixed-entropy}, then  we obtain the following corollaries. We use again the notation  $a_+ = ( \max \{a_1,0\}, \max \{a_2,0\}, \cdots, \max \{a_n,0\}  )$,  for $a \in \R^n$.
\vskip 2mm
\begin{cor}\label{Korrolar1}
For $1 \leq i \leq n$, let $ \varphi_i:\R^{n}\rightarrow [0, \infty)$ be 
log-concave
functions.
Then
\begin{eqnarray}\label{eqn:Korollar1}
 \left[ D_{KL} \left(P_{\vec \varphi} , Q_{\vec \varphi} \right) \right]^{n} \leq 
  \prod_{i=1}^{ n} \ \left[\log \left(  \frac{\int \varphi_i^\circ dx}{ \int \varphi_i dx}   \right)\right]_+
 \  \left( \int \varphi_i  dx \right) .
\end{eqnarray}
Equality holds if and only if $\varphi_i (x)= c_i e^{-\frac{1}{2} \langle A x, x \rangle}$, where
 $c_i$ is a positive constant  and $A$ is a $(n \times n)$  positive definite  matrix.
\end{cor}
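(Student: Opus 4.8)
The plan is to specialize Theorem \ref{mixed-entropy} to the choice $f_i(t) = \log t$ for every $1 \le i \le n$. Since $t \mapsto \log t$ is concave on $(0, \infty)$, this falls under the concave branch of Theorem \ref{mixed-entropy}, whose conclusion then reads
\[
 \left[ D_{\vec f} \left(P_{\vec \varphi} , Q_{\vec \varphi} \right)  \right]^n  \ \le \
 \prod_{i=1}^{ n}  \log \left(  \frac{\int \varphi_i^\circ\, dx}{ \int \varphi_i\, dx}   \right)
 \left( \int \varphi_i\, dx \right).
\]
First I would identify the two sides. By Definition \ref{mixed-f-div-log}, taking $f_i = \log$ turns each factor $\varphi_i\, f_i(\cdot)$ into $q_{\varphi_i} \log(p_{\varphi_i}/q_{\varphi_i})$, since $q_{\varphi_i} = \varphi_i$ and $p_{\varphi_i}/q_{\varphi_i} = \varphi_i^{-2}\, e^{\langle\grad\varphi_i,x\rangle/\varphi_i}\det[\text{Hess}(-\log\varphi_i)]$. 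In accordance with the definition of the mixed relative entropy (\ref{relent}), the left-hand side becomes $[D_{KL}(P_{\vec\varphi}, Q_{\vec\varphi})]^n$, the positive part $[\,\cdot\,]_+$ on each factor being exactly what renders the $n$-th root of the product meaningful. Correspondingly, on the right-hand side each $\log(\int\varphi_i^\circ/\int\varphi_i)$ gets replaced by its positive part, yielding the asserted inequality (\ref{eqn:Korollar1}).

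For the equality characterization I would transport the one from Theorem \ref{mixed-entropy} verbatim: equality holds there precisely when $\varphi_i(x) = c_i e^{-\frac{1}{2}\langle A x, x\rangle}$ with $c_i > 0$ and a common positive definite matrix $A$. Sufficiency is checked directly from the closed form (\ref{exponential2}) specialized to $f_i = \log$; necessity is inherited because the reduction in Theorem \ref{mixed-entropy} forces, through the equality cases of H\"older's inequality (Theorem \ref{Alexandrov-Fenchel}), the matrix Brunn--Minkowski inequality (\ref{BMI}), and the single-variable bound (\ref{thm0,1}) of \cite{Werner-Yolcu}, all $\varphi_i$ to be Gaussians sharing one matrix $A$.

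The step requiring genuine care --- and the one I expect to be the main obstacle --- is the bookkeeping of the positive parts. Theorem \ref{mixed-entropy} is stated for $\mathbb{R}_+$-valued concave $f_i$, whereas $\log t$ is negative for $t < 1$, i.e.\ on the set $\{p_{\varphi_i} < q_{\varphi_i}\}$. One must therefore re-run the proof of Theorem \ref{mixed-entropy} with the truncated integrands $[q_{\varphi_i}\log(p_{\varphi_i}/q_{\varphi_i})]_+$ in place of $\varphi_i f_i(\cdot)$: H\"older's inequality still applies because the truncated factors are nonnegative, and the crux is to verify that each resulting one-dimensional integral $\int [q_{\varphi_i}\log(p_{\varphi_i}/q_{\varphi_i})]_+\, dx$ is still controlled by $[\log(\int\varphi_i^\circ/\int\varphi_i)]_+ \int\varphi_i$. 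This is where the concavity of $\log$ and the normalization identity $\int p_{\varphi_i}\, dx = \int \varphi_i^\circ\, dx$ underlying (\ref{thm0,1}) must be combined; making the passage from $\log$ to its positive part rigorous, rather than merely formal, is the delicate point, and I would devote the bulk of the work to confirming that truncation does not break the one-dimensional estimate.
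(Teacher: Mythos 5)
Your proposal follows exactly the paper's route: the paper's entire proof of Corollary \ref{Korrolar1} is the formal substitution $f_i(t)=\log t$ into Theorem \ref{mixed-entropy}, with the $[\,\cdot\,]_+$ inserted to match definition (\ref{relent}) and the equality characterization transported verbatim, which is precisely your first two paragraphs. The delicate point you flag in your last paragraph --- that $\log$ is not $\mathbb{R}_+$-valued, that $[\log t]_+$ is not concave, and that the truncated one-dimensional estimate $\int q_{\varphi_i}[\log(p_{\varphi_i}/q_{\varphi_i})]_+\,dx \leq [\log(\int\varphi_i^\circ/\int\varphi_i)]_+\int\varphi_i\,dx$ therefore needs its own verification --- is a genuine subtlety, but it is one the paper itself passes over in silence, so on this step you are being more careful than the source, not less.
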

\vskip 3mm
\begin{cor}\label{Korrolar2}
For $1 \leq i \leq n$, let $ \varphi_i:\R^{n}\rightarrow [0, \infty)$ be 
log-concave
functions such that $\int x \varphi_i dx =0$ for all $i$.
Then
\begin{eqnarray}\label{eqn:Korollar2}
 \left[ D_{KL} \left(P_{\vec \varphi} , Q_{\vec \varphi} \right)  \right]^{n} \leq 
  \prod_{i=1}^{ n} \ \left[\log \left(  \frac{(2 \pi)^n}{\left( \int \varphi_i dx\right)^2}   \right)\right]_+
 \  \left( \int_{} \varphi_i  dx \right) .
\end{eqnarray}
Equality holds if and only if $\varphi_i (x)= c_i e^{-\frac{1}{2} \langle A x, x \rangle}$, where
 $c_i$ is a positive constant  and $A$ is a $(n \times n)$  positive definite  matrix.
\end{cor}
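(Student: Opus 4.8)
The plan is to obtain (\ref{eqn:Korollar2}) directly from Corollary~\ref{Korrolar1} by controlling the dual integral $\int\varphi_i^\circ\,dx$ via the barycenter hypothesis. The right-hand side of (\ref{eqn:Korollar1}) already has the desired product structure, the only difference being that it carries the ratio $\int\varphi_i^\circ\,dx\big/\int\varphi_i\,dx$ inside the logarithm rather than $(2\pi)^d\big/(\int\varphi_i\,dx)^2$. The assumption $\int x\,\varphi_i\,dx=0$ is precisely the normalization under which the sharp functional Blaschke--Santal\'o inequality applies, so I would use it to bound each $\int\varphi_i^\circ\,dx$ from above.

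Concretely, I would invoke the functional Blaschke--Santal\'o inequality (see \cite{ArtKlarMil, MeyerFradelizi2007, Lehec2009}): for a log concave $\varphi:\R^{d}\to[0,\infty)$ with barycenter at the origin one has $\big(\int\varphi\,dx\big)\big(\int\varphi^\circ\,dx\big)\le(2\pi)^d$, with equality if and only if $\varphi$ is a centered Gaussian $c\,e^{-\frac12\langle Ax,x\rangle}$. Applied to each $\varphi_i$ and divided by $(\int\varphi_i\,dx)^2>0$, this yields
\[
\frac{\int\varphi_i^\circ\,dx}{\int\varphi_i\,dx}\ \le\ \frac{(2\pi)^d}{\left(\int\varphi_i\,dx\right)^2}.
\]
Since $t\mapsto[\log t]_+$ is nondecreasing and every factor $\int\varphi_i\,dx$ is nonnegative, substituting this bound term by term into the product on the right of (\ref{eqn:Korollar1}) gives (\ref{eqn:Korollar2}).

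For the equality characterization I would read off both implications from the two-step chain
\[
\left[ D_{KL}\left(P_{\vec\varphi},Q_{\vec\varphi}\right)\right]^n \le \prod_{i=1}^n \left[\log\left(\frac{\int\varphi_i^\circ\,dx}{\int\varphi_i\,dx}\right)\right]_+\!\left(\int\varphi_i\,dx\right) \le \prod_{i=1}^n\left[\log\left(\frac{(2\pi)^d}{\left(\int\varphi_i\,dx\right)^2}\right)\right]_+\!\left(\int\varphi_i\,dx\right).
\]
If equality holds in (\ref{eqn:Korollar2}), then the middle product is squeezed between the two equal outer quantities, so equality already holds in Corollary~\ref{Korrolar1}; by its equality characterization (which in turn rests on the Monge--Amp\`ere uniqueness result of \cite{Werner-Yolcu}) this forces $\varphi_i(x)=c_i\,e^{-\frac12\langle Ax,x\rangle}$ with a common positive definite $A$. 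Conversely, for such centered Gaussians equality holds in Corollary~\ref{Korrolar1} and the Blaschke--Santal\'o inequality is simultaneously an equality, so the two bounds coincide and equality is inherited by (\ref{eqn:Korollar2}); this can also be verified directly from (\ref{exponential2}).

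The only genuinely external ingredient is the sharp functional Blaschke--Santal\'o inequality together with its equality case, and that is where I expect the real content to lie; the remaining steps are only monotonicity of $[\log(\cdot)]_+$ and the bookkeeping of the chain. It is worth noting that the necessity direction requires nothing about the Blaschke--Santal\'o equality case, since it follows purely from the squeezing and Corollary~\ref{Korrolar1}, whereas sufficiency is exactly where the self-duality of Gaussians (equality in Blaschke--Santal\'o) enters.
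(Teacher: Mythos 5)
Your proof is correct, and for the inequality itself it coincides with the paper's argument: apply the functional Blaschke--Santal\'o inequality to each factor on the right-hand side of (\ref{eqn:Korollar1}) and use monotonicity of $[\log(\cdot)]_+$ together with nonnegativity of $\int\varphi_i\,dx$. (The constant $(2\pi)^d$ you write is the honest one for functions on $\R^d$; the paper writes $(2\pi)^n$ here and throughout, implicitly identifying the dimension $d$ with the number $n$ of functions, so this is the paper's conflation, not a discrepancy you introduced.) Where you genuinely differ is the equality characterization. The paper's necessity argument runs: equality in (\ref{eqn:Korollar2}) forces equality in the Blaschke--Santal\'o inequality for every $i$, hence each $\varphi_i$ is a Gaussian with its own matrix $A_i$; the resulting identity $\prod_{i=1}^n(\det A_i)^{1/n}=\det\big(\sum_{i=1}^n A_i\big)/n^n$ then forces a common $A$ via the matrix Brunn--Minkowski inequality (\ref{BM+GA}), exactly as in the proof of Theorem~\ref{mixed-entropy}. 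You instead squeeze the two-step chain and conclude equality in Corollary~\ref{Korrolar1}, whose stated characterization already yields Gaussians with a common matrix, so in the necessity direction you bypass both the Blaschke--Santal\'o equality case and the matrix Brunn--Minkowski step. Your route is also slightly more robust: the paper's passage from equality of the two products to term-wise equality in Blaschke--Santal\'o tacitly assumes each factor $\big[\log\big((2\pi)^n/(\int\varphi_i\,dx)^2\big)\big]_+\int\varphi_i\,dx$ is strictly positive (if one vanishes, term-wise equality does not follow, and equality of $[\log t]_+$ with $[\log s]_+$ in any case only forces $t=s$ when the logarithms are positive), whereas your deduction of equality in Corollary~\ref{Korrolar1} from the squeeze is unconditional. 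In the end both arguments rest on the same deep ingredients, since the characterization in Corollary~\ref{Korrolar1} is itself proved via \cite{Werner-Yolcu} and (\ref{BM+GA}), but your packaging is cleaner, and your closing observation that the Blaschke--Santal\'o equality case is needed only for the sufficiency direction is accurate.
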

\vskip 2mm
\begin{proof} The functional Blaschke Santal\'o inequality  \cite{ArtKlarMil, KBallthesis,  MeyerFradelizi2007, Lehec2009}  says that for a log concave function $\varphi$
with barycenter at $0$, i.e.,   $\int x \varphi dx =0$, one has 
$$
\left( \int \varphi\, dx \right) \   \left( \int  \varphi^\circ \, dx \right) \leq ( 2 \pi )^n ,
$$
with equality if and only if there exists a positive definite matrix $A$ and $c>0$ such that 
$\varphi (x) = c  e^{ - \langle A x , x \rangle / 2  }$. We apply the functional Blaschke Santal\'o  inequality on the right hand side of (\ref{eqn:Korollar1}) to each $\varphi_i$
and get  inequality  (\ref{eqn:Korollar2}).
\par
Using (\ref{exponential2}), it is easy to see that equality holds in (\ref{eqn:Korollar2})  if $\varphi_i (x)= c_i e^{-\frac{1}{2} \langle A x, x \rangle}$, where
 $c_i$ is a positive constant  and $A$ is a $(n \times n)$  positive definite  matrix.
On the other hand, if equality holds in (\ref{eqn:Korollar2}), then equality holds in particular for all $i$ in the functional Blaschke Santal\'o inequality which means that 
for all $i$, $\varphi_i(x) =  c_i e^{-\frac{1}{2} \langle A_i x, x \rangle}$, where
 $c_i$ is a positive constant  and $A_i$ is a $(n \times n)$  positive definite  matrix. Thus, as above in the proof of Theorem \ref{mixed-entropy},  the equality condition again leads to the  identity
 \begin{equation*}
  \prod_{i=1}^{ n}  \left(  \det (A_i ) \right) ^\frac{1}{n} = \frac{\det \left(\sum_{i=1}^n A_i\right) }{n^n}
\end{equation*} 
and we conclude as above.
\end{proof}

\vskip 3mm
\section{ The $i$-th mixed $f$-divergence for log-concave functions}\label{Section-i-mixedlogcon}
Throughout this section, let $f_1, f_2 : (0, \infty) \rightarrow  \R_+ $ be either convex or concave functions. 
As above, let $(X, \mu)$ be a finite measure space  and, for $l=1,2$, let  $P_l=p_l \mu$ and  $Q_l=q_l \mu$ be  measures on $X$ that are  absolutely continuous with respect to the measure $\mu$. 
Denote $\vec f  = (f_1, f_2$),  $ \vec P = (P_1, P_2)$ and $ \vec Q =( Q_1, Q_2)$. 
\vskip 2mm
The  $i$-th mixed $f$-divergence was introduced  in  \cite{WernerYe2013}. We  refer to  \cite{WernerYe2013} for properties and examples and only give the definition.
\begin{defn} \label{DW}
Let $i \in \R$. The $i$-th mixed $f$-divergence for $(\vec f, \vec{\textbf{P}}, \vec{\textbf{Q}})$ is defined in \cite{WernerYe2013} as 
\begin{equation}\label{def:mixed-fdiv}
D_{\vec f}(\vec{\textbf{P}}, \vec{\textbf{Q}};i )=
 \int_{X}  \left[f_1 \left(\frac{p_1}{q_1} \right) q_1 \right]^{\frac{i}{n}} \left[f_2 \left(\frac{p_2}{q_2} \right) q_2 \right]^{\frac{n-i}{n}}d\mu.
\end{equation}
\end{defn}
As before, for $l =1,2 $, we  let
\begin{equation}\label{Q,P1}
q_{\varphi_l}=\varphi_l  \hskip 4mm \text{and} \hskip 4mm   p_{\varphi_l} = \varphi_l^{-1}  e^{\frac{\langle\grad\varphi_l, x \rangle}{\varphi_l}} \mbox{det} \left[  \text{Hess} \left(-\log \varphi_l \right)\right]
\end{equation}
and  use Definition \ref{DW} with   $q_l=q_{\varphi_l}$ and $p_l = p_{\varphi_l}$, $l=1,2$,  and get 
the {\em $i$-th mixed  $f$-divergences} for log concave  functions.
\vskip 2mm
\begin{defn} \label{i-th-mixed-f-div-log}
Let $f_1, f_2 : (0, \infty) \rightarrow  \R_+ $ be either convex or concave functions and let $ \varphi_1, \varphi_2:\R^{n}\rightarrow [0, \infty)$ be  log concave functions.
Let $i \in \R$. Then the  $i$-th mixed $f$-divergence of $\vec \varphi = (\varphi_1, \varphi_2)$ is
\begin{eqnarray*}\label{def:i-th-mixed-fdiv}
&& \hskip -5mm D_{\vec f}\left((P_{\varphi_1}, P_{\varphi_2}) , (Q_{\varphi_1}, Q_{\varphi_2} ); i \right) =
\int   \left[f_1 \left(\frac{p_1}{q_1} \right) q_1 \right]^{\frac{i}{n}} \left[f_2 \left(\frac{p_2}{q_2} \right) q_2 \right]^{\frac{n-i}{n}} dx \   =  \nonumber  \\
&& \hskip -5mm {\displaystyle \int }   \hskip-1mm \left[ \varphi_1    f_1 \left(
\frac{e^{\frac{\langle\grad\varphi_1, x \rangle}{\varphi_1}} }{\varphi_1^2}  \mbox{det} \left[\text{Hess} \left( -\log \varphi_1 \right)\right] \right) \right]^{\frac{i}{n}} 
\hskip-1mm \left[ \varphi_2    f_2 \left(
\frac{e^{\frac{\langle\grad\varphi_2, x \rangle}{\varphi_2}} }{\varphi_2^2}   \mbox{det} \left[\text{Hess} \left( -\log \varphi_2 \right)\right] \right) \right]^{\frac{n-i}{n}} 
\hskip -7mm dx.
\end{eqnarray*}
\end{defn}
\vskip 3mm
If we let  $q_l=q_{\varphi_l}$ and  $p_l= p_{\varphi_l}$,  $l=1,2$, then the  following proposition is an immediate consequence of Proposition V.I of \cite{WernerYe2013}.  
We also denote
$$  P_{ \vec \varphi }= (P_{\varphi_1},   P_{\varphi_2}),  \hskip 4mm   Q_{ \vec \varphi }= ( Q_{\varphi_1},  Q_{\varphi_2}).
$$
\vskip 2mm
\begin{prop}\label{i-th-mixed-prop}
Let $f_1, f_2 : (0, \infty) \rightarrow  \R_+ $ be either convex or concave functions and let $ \varphi_1, \varphi_2:\R^{n}\rightarrow [0, \infty)$ be  log concave functions. 
If $j \leq i \leq k$ or $k \leq i \leq j$, then
\begin{eqnarray*}\label{def:i-th-mixed-fdiv}
D_{\vec f}  \left(  P_{ \vec \varphi } \ ,  Q_{ \vec \varphi }\  ;  \ i \right)  \  \leq  \  \left[ D_{\vec f}  \left(  P_{ \vec \varphi } \ ,  Q_{ \vec \varphi } \  ; \ j \right) \right]^{\frac{k-i}{k-j}} \times  \  \left[ D_{\vec f}  \left(  P_{ \vec \varphi } \ ,  Q_{ \vec \varphi }  \ ; \ k \right) \right]^{\frac{i-j}{k-j}} .
\end{eqnarray*}
Equality holds trivially if $ i=k$ or $i=j$. Otherwise, equality holds if and only if one of the functions $f_l\left(\frac{p_{\varphi_l}}{q_{\varphi_l}} \right) \ q_{\varphi_l}$, $l=1,2$ is null or 
are effectively proportional.
\end{prop}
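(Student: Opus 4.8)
The plan is to recognize the asserted inequality as the log-convexity of the map $i \mapsto D_{\vec f}(P_{\vec\varphi}, Q_{\vec\varphi}; i)$ in the single real parameter $i$, and to derive it from H\"older's inequality applied to a suitable factorization of the integrand. This is precisely the specialization of Proposition V.I of \cite{WernerYe2013} to the densities $q_l = q_{\varphi_l}$ and $p_l = p_{\varphi_l}$ from (\ref{Q,P1}); I would carry out the self-contained argument so as to reproduce that specialization directly. First I would abbreviate the two nonnegative integrands by
\[
g_l(x) = f_l\!\left(\frac{p_{\varphi_l}}{q_{\varphi_l}}\right) q_{\varphi_l}, \qquad l = 1,2,
\]
so that Definition \ref{i-th-mixed-f-div-log} reads $D_{\vec f}(P_{\vec\varphi}, Q_{\vec\varphi}; i) = \int g_1^{\,i/n} g_2^{\,(n-i)/n}\, dx$.

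Since the claimed bound is symmetric under interchanging $(j,k)$ together with the accompanying exponents, I may assume $j \le k$; the case $j = k$ forces $i = j = k$ and gives equality trivially, so I would assume $j < k$ and $j \le i \le k$. Setting $\theta = \frac{k-i}{k-j} \in [0,1]$, one has $1 - \theta = \frac{i-j}{k-j}$ together with the two affine identities $i = \theta j + (1-\theta) k$ and $n - i = \theta(n-j) + (1-\theta)(n-k)$. These give the pointwise factorization
\[
g_1^{\,i/n} g_2^{\,(n-i)/n} = \Big[ g_1^{\,j/n} g_2^{\,(n-j)/n} \Big]^{\theta} \Big[ g_1^{\,k/n} g_2^{\,(n-k)/n} \Big]^{1-\theta}.
\]
If $\theta \in \{0,1\}$ then $i \in \{j,k\}$ and the statement is trivial; otherwise I would integrate and apply H\"older's inequality with the conjugate exponents $1/\theta$ and $1/(1-\theta)$ to the two bracketed factors, obtaining
\[
D_{\vec f}(P_{\vec\varphi}, Q_{\vec\varphi}; i) \le \Big[ D_{\vec f}(P_{\vec\varphi}, Q_{\vec\varphi}; j) \Big]^{\theta} \Big[ D_{\vec f}(P_{\vec\varphi}, Q_{\vec\varphi}; k) \Big]^{1-\theta},
\]
which is exactly the desired inequality once $\theta$ and $1-\theta$ are written out as $\frac{k-i}{k-j}$ and $\frac{i-j}{k-j}$.

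For the equality characterization I would invoke the equality case of H\"older's inequality \cite{HardyLittlewoodPolya}: equality forces the two bracketed factors $g_1^{\,j/n} g_2^{\,(n-j)/n}$ and $g_1^{\,k/n} g_2^{\,(n-k)/n}$ to be effectively proportional, or one of them to vanish almost everywhere. Since $j \ne k$, on the set where $g_1$ and $g_2$ are both positive this proportionality is equivalent to $(g_1/g_2)^{(k-j)/n}$ being constant, hence to $g_1$ and $g_2$ being effectively proportional; the complementary possibility is that one of $g_1, g_2$ is null. The only genuinely delicate point is this last translation step---tracking the null set and the vanishing-set cases so that ``proportionality of the bracketed factors'' is shown to be precisely equivalent to ``$g_1$ or $g_2$ null, or $g_1$ and $g_2$ effectively proportional.'' Everything else is a direct application of the factorization and H\"older.
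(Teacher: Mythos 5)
Your proof is correct and is essentially the paper's own route: the paper disposes of this proposition by citing Proposition V.I of \cite{WernerYe2013}, whose content is exactly the H\"older/log-convexity argument you give, and the identical H\"older factorization technique is what the paper uses for its Theorem \ref{Alexandrov-Fenchel}. The one point you flag as delicate---translating effective proportionality of the bracketed factors into the stated characterization for your $g_1$ and $g_2$---is likewise left implicit by the paper and its citation (it is harmless under the standing assumptions, since the $\varphi_l$ are everywhere positive and the densities are assumed nonzero almost everywhere), so nothing is missing relative to the source.
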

 \vskip 3mm
The next  corollary follows immediately from Proposition \ref{i-th-mixed-prop} and (\ref{thm0,1}).
\vskip 2mm
\begin{cor}\label{corollary-i1}
Let 
$ \varphi_1, \varphi_2:\R^{n}\rightarrow [0, \infty)$ be  log concave functions and let  $f_1, f_2 : (0, \infty) \rightarrow  \R_+ $. If
 $f_1,  f_2 $ are  concave  and  $ 0 \leq i \leq n$, then
\begin{eqnarray*}\label{def:i-th-mixed-fdiv}
&& \hskip -4mm \left[ D_{\vec f}  \left(  P_{ \vec \varphi } \ ,  Q_{ \vec \varphi }  \ ; \ i \right) \right]^n  \\
&&  \hskip 3mm  \leq \left[ f_1 \left(  \frac{\int \varphi_1^\circ dx}{ \int \varphi_1 dx}   \right)
 \  \left( \int_{} \varphi_1  dx \right) \right]^i   \times  \left[f_2 \left(  \frac{\int \varphi_2^\circ dx}{ \int \varphi_2 dx}   \right)
 \  \left( \int_{} \varphi_2  dx \right) \right]^{n-i} \hskip -3mm .
\end{eqnarray*}
\par
\noindent
If  (i) $f_1$ is convex,  $  f_2 $ is   concave and $i \geq n$, or 
(ii) $f_1$ is  concave,  $f_2 $ is convex  and $i \leq  0$, then the inequality is reversed.
\par
\noindent
Equality holds trivially if $ i=0$ or $i=n$. Otherwise, equality holds if  and only if $\varphi_l = c_l e^{- \frac{1}{2} \langle A x, x \rangle}$, $l=1,2$, where
 $c_l$ is a positive constant  and $A$ is a $(n \times n)$  positive definite  matrix. 
\end{cor}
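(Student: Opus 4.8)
The plan is to apply Proposition \ref{i-th-mixed-prop} with the endpoints $j=0$ and $k=n$, to recognize the two boundary divergences as classical $f$-divergences, and then to feed each of them into the entropy bound (\ref{thm0,1}). First I would record the boundary evaluations: directly from Definition \ref{i-th-mixed-f-div-log}, the factor with exponent $(n-i)/n$ becomes $1$ at $i=n$ and the factor with exponent $i/n$ becomes $1$ at $i=0$, so that $D_{\vec f}(P_{\vec\varphi},Q_{\vec\varphi};n)=D_{f_1}(P_{\varphi_1},Q_{\varphi_1})$ and $D_{\vec f}(P_{\vec\varphi},Q_{\vec\varphi};0)=D_{f_2}(P_{\varphi_2},Q_{\varphi_2})$. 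For $0\le i\le n$ with $f_1,f_2$ concave, Proposition \ref{i-th-mixed-prop} applied to the triple $(j,i,k)=(0,i,n)$ gives
\[
D_{\vec f}(P_{\vec\varphi},Q_{\vec\varphi};i)\ \le\ \big[D_{f_2}(P_{\varphi_2},Q_{\varphi_2})\big]^{\frac{n-i}{n}}\,\big[D_{f_1}(P_{\varphi_1},Q_{\varphi_1})\big]^{\frac{i}{n}}.
\]
Raising to the $n$-th power and then bounding each $D_{f_l}(P_{\varphi_l},Q_{\varphi_l})$ from above by (\ref{thm0,1}), which is legitimate because both exponents $i$ and $n-i$ are nonnegative, produces exactly the asserted inequality.

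For the reversed cases I would keep the same two boundary evaluations but choose a cyclically shifted triple so that $i$ falls outside $[0,n]$. If $i\ge n$, apply Proposition \ref{i-th-mixed-prop} to the ordered triple $(0,n,i)$; this gives $D_{\vec f}(\cdot;n)\le D_{\vec f}(\cdot;0)^{(i-n)/i}D_{\vec f}(\cdot;i)^{n/i}$, which rearranges to $[D_{\vec f}(\cdot;i)]^n\ge [D_{f_1}(P_{\varphi_1},Q_{\varphi_1})]^{i}[D_{f_2}(P_{\varphi_2},Q_{\varphi_2})]^{n-i}$. Now $f_1$ convex reverses (\ref{thm0,1}), namely $D_{f_1}\ge f_1(\cdots)\int\varphi_1\,dx$ (the convex branch of Theorem 1 in \cite{CaglarWerner}), and since $i>0$ the exponent preserves the direction; while $f_2$ concave keeps (\ref{thm0,1}) but the nonpositive exponent $n-i$ reverses it, so $D_{f_2}^{\,n-i}\ge [f_2(\cdots)\int\varphi_2\,dx]^{\,n-i}$. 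The two effects point the same way and yield the reversed inequality. The case $i\le 0$ with $f_1$ concave and $f_2$ convex is handled identically after using the triple $(i,0,n)$.

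The delicate point, and where I expect the only real work to lie, is the equality analysis for $0<i<n$. Equality in the chain forces equality simultaneously in Proposition \ref{i-th-mixed-prop} and in (\ref{thm0,1}) for both indices. Equality in (\ref{thm0,1}) is settled by \cite{Werner-Yolcu}: it holds if and only if $\varphi_l(x)=c_l e^{-\frac12\langle A_l x,x\rangle}$ for some positive definite $A_l$. For such Gaussians a direct computation, as in (\ref{exponential2}), shows that $p_{\varphi_l}/q_{\varphi_l}\equiv \det(A_l)/c_l^2$ is constant, so the integrand $f_l(p_{\varphi_l}/q_{\varphi_l})\,q_{\varphi_l}$ is a positive multiple of $e^{-\frac12\langle A_l x,x\rangle}$. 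The equality clause of Proposition \ref{i-th-mixed-prop} then requires these two Gaussians to be effectively proportional, which in the non-null case is possible only when $A_1=A_2=:A$; conversely, a common $A$ makes both equality conditions hold, and equality can be checked directly through (\ref{exponential2}). This synchronization of the matrices via effective proportionality is the analogue of, but simpler than, the Brunn--Minkowski step used in Theorem \ref{mixed-entropy}. Finally, at the endpoints $i=0$ and $i=n$ the interpolation step of Proposition \ref{i-th-mixed-prop} degenerates to a trivial identity, so no proportionality constraint is active and the statement reduces to a single application of (\ref{thm0,1}); this accounts for the clause that equality holds trivially there.
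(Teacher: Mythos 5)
Your derivation of the inequality itself is exactly the paper's: the paper likewise applies Proposition \ref{i-th-mixed-prop} with the endpoints $\{0,n\}$ to get $\left[ D_{\vec f}(P_{\vec\varphi},Q_{\vec\varphi};i)\right]^n \le \left[D_{f_1}(P_{\varphi_1},Q_{\varphi_1})\right]^i\left[D_{f_2}(P_{\varphi_2},Q_{\varphi_2})\right]^{n-i}$ and then bounds each factor by (\ref{thm0,1}); for the reversed cases the paper only says the proof is ``done similarly,'' so your shifted-triple argument (middle index $n$ with endpoints $0$ and $i$, plus the convex branch of Theorem 1 of \cite{CaglarWerner} and the sign of the exponent $n-i$) is a correct filling-in of what the paper leaves implicit. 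The one place where you genuinely diverge is the equality analysis for $0<i<n$. Both you and the paper first use the equality case of (\ref{thm0,1}) from \cite{Werner-Yolcu} to force $\varphi_l(x)=c_l e^{-\frac{1}{2}\langle A_l x,x\rangle}$, $l=1,2$. From there the paper substitutes the Gaussians into the chain and extracts the determinant identity $\det\bigl(\tfrac{i}{n}A_1+(1-\tfrac{i}{n})A_2\bigr)=(\det A_1)^{i/n}(\det A_2)^{1-i/n}$, then concludes $A_1=A_2$ from the Brunn--Minkowski inequality for positive definite matrices \cite{CoverThomas2006, KyFan, MINKOWSKI}, the same mechanism it uses in Theorem \ref{mixed-entropy} and Corollary \ref{Korrolar2}. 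You instead return to the H\"older step and invoke the effective-proportionality clause of Proposition \ref{i-th-mixed-prop}: since $p_{\varphi_l}/q_{\varphi_l}\equiv\det A_l/c_l^2$ is constant for Gaussians, the two integrand factors are positive multiples of $e^{-\frac{1}{2}\langle A_l x,x\rangle}$, and proportionality of these profiles forces $A_1=A_2$ directly (evaluate at $x=0$ and compare quadratic forms). Both mechanisms are valid; yours is a bit more elementary in that it avoids the matrix inequality altogether, while the paper's keeps the argument uniform with its other equality characterizations. Like the paper, you pass over the degenerate situations (e.g., $f_l$ vanishing at the relevant point) in which equality can occur trivially, so your treatment is at the same level of rigor as the original.
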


\begin{proof} We give the proof in the first case. The others are done similarly.
Let $k=0$ and $j=n$ (or $j=0$ and $k=n$)  in Proposition \ref{i-th-mixed-prop}. By (\ref{thm0,1}),
\begin{eqnarray*}\label{def:i-th-mixed-fdiv}
&&\left[ D_{\vec f}  \left(  P_{ \vec \varphi } \ ,  Q_{ \vec \varphi }  \ ; \ i \right) \right]^n  \ \leq \ \left[ D_{ f_1}  \left(  P_{  \varphi_1 } \ ,  Q_{  \varphi_1 }  \right) \right]^{i} \times  \  \left[ D_{f_2}  \left(  P_{ \varphi_2 } \ ,  Q_{ \varphi_2 } \right) \right]^{n-i}  \\
&& \leq   \left[ f_1 \left(  \frac{\int \varphi_1^\circ dx}{ \int \varphi_1 dx}   \right)
 \  \left( \int_{} \varphi_1  dx \right) \right]^i   \  \left[f_2 \left(  \frac{\int \varphi_2^\circ dx}{ \int \varphi_2 dx}   \right)
 \  \left( \int_{} \varphi_2  dx \right) \right]^{n-i}.
\end{eqnarray*}
 \par

 It is easy to see that equality holds if $\varphi_l = c_l e^{- \frac{1}{2} \langle A x, x \rangle}$, $l=1,2$, where
 $c_l$ is a positive constant  and $A$ is a $ (n \times n)$  positive definite  matrix. 
 On the other hand, if equality holds  in the inequality, then in particular, equality holds in (\ref{thm0,1}), which   means that 
 $\varphi_l = c_l e^{- \frac{1}{2} \langle A_l x, x \rangle}$, $l=1,2$, where
 $c_l$ are  positive constants and $A_l$ are  $(n \times n)$  positive definite  matrices.  Thus, equality in the inequality leads to the following identity
 $$
 \det\left(\frac{i}{n} A_1 + \left(1-\frac{i}{n}\right) A_2 \right) = \left(\det A_1\right)^\frac{i}{n} \left(\det A_2\right)^{1-\frac{i}{n} }.
$$ 
We conclude again,  by the Brunn Minkowski inequality for matrices \cite{CoverThomas2006, KyFan, MINKOWSKI}, that $A_1=A_2$.
\end{proof}
\noindent
\textbf{Remark.}
In particular, if we let $f_1 (t) = f_2 (t) = \log (t)$ in Corollary \ref{corollary-i1}, 
then we obtain similar results for the {\em $i$-th mixed Kullback-Leibler divergence}, as in Corollary \ref{Korrolar1}.

\vskip 3mm

\section{Applications to special functions:  Mixed $L_\lambda$-affine surface area}\label{Section-specialf}

Now we consider special functions $f$ and obtain special cases of mixed $f$-divergences for log concave functions.  
\par
For $i=1, \dots, n$, we let  $f_i(t)=t^{\lambda}$, $\ - \infty < \lambda< \infty$, and  we obtain the {\em mixed $L_\lambda$-affine surface area},  denoted by   $as_{ \lambda} (\vec \varphi)$, for    log concave functions $ \varphi_i $, 
\begin{equation}\label{mixed-asp-Logconcave}
as_{ \lambda} (\vec \varphi) = 
{\displaystyle \int }   \prod_{i=1}^n  \left[ \varphi_i \   \left(   
\frac{e^{\frac{\langle\grad\varphi_i, x \rangle}{\varphi_i}}}{\varphi_i^2}  \  \mbox{det} \left[   \text{Hess} \left(- \log \varphi_i \right)\right] \right)^{\lambda} \right]^{\frac{1}{n}}dx,
\end{equation}
or, writing $\varphi_i(x)= e^{-\psi_i(x)}$, $\psi_i$ convex,
\begin{equation}\label{mixed-sp-Logconcave1}
as_{ \lambda} (\vec \varphi)  = \int  \prod_{i=1}^n  \left[   e^{(2\lam -1)\psi_i(x)-\lam \langle x, \nabla\psi_i(x)\rangle}\left(\det \, {\text{Hess } \psi_i (x)}\right)^{\lam} \right]^{\frac{1}{n}}dx.
\end{equation}
\vskip 2mm
In particular,  $as_{0}(\vec \varphi)= \int  (\varphi_1  \cdots  \varphi_n)^{\frac{1}{n}}  dx $.
Please note   that for any $\vec \varphi$, we have 
$as_{ \lambda} (\vec \varphi) \geq 0 $.
Moreover,  by Proposition \ref{cor:mixed-f-div-SLn}, the  $as_{  \lambda} (\vec \varphi)$ are   invariant under self adjoint $SL(n)$ maps.
\vskip 2mm
\noindent
\textbf{Remarks.}
(i) If we let $\varphi_i = \varphi$ for $ i = 1, \cdots, n$,  we recover the {\em  $L_\lambda$-affine surface area}, $as_{ \lambda } ( \varphi)$, defined in \cite{CFGLSW} (see also \cite{CaglarWerner}),
\begin{equation}\label{asp-Logconcave}
as_{ \lambda} ( \varphi) = 
{\displaystyle \int }     \varphi \   \left(   
\frac{e^{\frac{\langle\grad\varphi, x \rangle}{\varphi}}}{\varphi^2}  \  \mbox{det} \left[   \text{Hess} \left(- \log \varphi \right)\right] \right)^{\lambda} dx.
\end{equation}
(ii)  For  $1 \leq i \leq n $, let $A_i$ be a $(n \times n)$  positive definite matrix, $c_i>0$ a constant and let $ \  \varphi_i(x) = c_i e^{- \frac{1}{2} \langle A_ix, x \rangle}$. Then, 
\begin{equation}\label{affineGaussian}
  as_{ \lambda} (\vec \varphi) = \  \frac{( 2 n \pi)^{\frac{n}{2}}}{ \big( \det( \sum_{i=1}^n A_i) \big)^{\frac{1}{2}}} \  \prod_{i=1}^{n} \left[  c_i^{1 - 2\lambda} \  \left( \det(A_i) \right)^{\lambda} \right]^{\frac{1}{n}} .
\end{equation}

We also give a definition for  $ as_{\infty } (\vec \varphi)$ and  $ as_{-\infty } (\vec \varphi)$, similarly as it was done for the $L_\lambda$-affine surface area \cite{CaglarWerner} (see also \cite{MW2}). 
\begin{equation}
as_{\infty } (\vec \varphi)= 
\max_{x} \ \prod_{i=1}^n  \left[ 
\frac{e^{\frac{\langle\grad\varphi_i, x \rangle}{\varphi_i}}}{\varphi_i^2}  \  \mbox{det} \left[   \text{Hess} \left(- \log \varphi_i \right)\right] \right]^{\frac{1}{n}} \quad \text{and} \quad 
as_{-\infty} (\vec  \varphi) =\frac{1}{ as_{\infty} (\vec \varphi)} .
\end{equation}
\vskip 3mm
\noindent
The following two propositions are  direct consequences of  Theorem \ref{f-duality} and Theorem \ref{Alexandrov-Fenchel}.
\vskip 2mm
\begin{prop}\label{lambda-duality}
Let $ \varphi_i:\R^{n}\rightarrow [0, \infty)$ be  
log-concave functions such that $\varphi_i = a_i \varphi$ for some log concave function $ \varphi :\R^{n}\rightarrow [0, \infty)$ and $ a_i >0$, $i=1, \cdots, n$. Then
\begin{equation}\label{mixed-affine-polar-identity} 
as_{ \lambda} \left( \vec \varphi \right) = as_{ 1- \lambda} \left( \vec{\varphi^\circ}\right) .
\end{equation}
\end{prop}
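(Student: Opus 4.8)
The plan is to read off the identity directly from Theorem \ref{f-duality} by specializing the vector $\vec f$ to the monomial case. Concretely, I would apply Theorem \ref{f-duality} with the choice $f_i(t) = t^{1-\lambda}$ for every $i = 1, \dots, n$. The first thing to check is that this choice meets the hypotheses: for any real $\lambda$ the map $t \mapsto t^{1-\lambda}$ sends $(0,\infty)$ into $\R_+$, and on $(0,\infty)$ it is either convex (if $1-\lambda \ge 1$ or $1-\lambda \le 0$) or concave (if $0 \le 1-\lambda \le 1$), so in all cases it is of the admissible type. Moreover the structural assumption $\varphi_i = a_i\varphi$ with $a_i>0$ in the proposition is exactly the hypothesis $\varphi_i = \lambda_i \varphi$ required by Theorem \ref{f-duality}, so the theorem applies verbatim.

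Next I would compute the $*$-adjoint of this choice. By the definition (\ref{adjoint}),
\[
f_i^*(t) = t\, f_i(1/t) = t \cdot t^{-(1-\lambda)} = t^{\lambda},
\]
so that $\vec f^* = (t^\lambda, \dots, t^\lambda)$. The point is then simply to recognize the two sides of (\ref{mixed-polar-identity}) as mixed $L_\lambda$-affine surface areas via the defining formula (\ref{mixed-asp-Logconcave}): choosing $f_i(t)=t^{1-\lambda}$ inside $D_{\vec f}(P_{\vec{\varphi^\circ}}, Q_{\vec{\varphi^\circ}})$ gives exactly $as_{1-\lambda}(\vec{\varphi^\circ})$, while choosing $f_i^*(t)=t^{\lambda}$ inside $D_{\vec f^*}(P_{\vec\varphi}, Q_{\vec\varphi})$ gives exactly $as_{\lambda}(\vec\varphi)$.

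Putting these together, Theorem \ref{f-duality} yields
\[
as_{1-\lambda}(\vec{\varphi^\circ}) = D_{\vec f}\!\left(P_{\vec{\varphi^\circ}}, Q_{\vec{\varphi^\circ}}\right) = D_{\vec f^*}\!\left(P_{\vec\varphi}, Q_{\vec\varphi}\right) = as_{\lambda}(\vec\varphi),
\]
which is precisely the asserted identity (\ref{mixed-affine-polar-identity}). I expect there to be essentially no genuine obstacle here: the whole content is the correct bookkeeping of the adjoint exponent $(1-\lambda) \mapsto \lambda$ and the identification with (\ref{mixed-asp-Logconcave}). The only point deserving a word of care is the range of $\lambda$: the argument as written covers all finite $\lambda \in \R$, and for the endpoint cases $\lambda = \pm\infty$ one would instead invoke the separate $\max$-definition of $as_{\pm\infty}(\vec\varphi)$ together with a limiting argument, but for the stated proposition the finite case suffices.
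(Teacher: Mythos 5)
Your proof is correct and is exactly the paper's intended argument: the paper states Proposition \ref{lambda-duality} as a direct consequence of Theorem \ref{f-duality}, which is precisely your specialization $f_i(t)=t^{1-\lambda}$, $f_i^*(t)=t^{\lambda}$, identifying both sides with mixed $L_\lambda$-affine surface areas via (\ref{mixed-asp-Logconcave}). The bookkeeping of the adjoint exponent and the verification that $t\mapsto t^{1-\lambda}$ is admissible for every real $\lambda$ are exactly the details the paper leaves implicit.
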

\vskip 2mm
Proposition \ref{lambda-duality} is generalization of the duality  $\ as_\lam(\varphi) = as_{1-\lam}(\varphi^\circ )$,  proved in \cite{CFGLSW}.

\vskip 3mm
In the next proposition we use, for $k >n-m$,  the notation
\begin{eqnarray*}\label{}
as_{ \lambda} ( \vec\varphi_{m,k})& = &
{\displaystyle \int }     \prod_{i=1}^{n-m} \left[ \varphi _i \   \left(   
\frac{e^{\frac{\langle\grad\varphi_i, x \rangle}{\varphi_i}}}{\varphi_i^2}  \  \mbox{det} \left[   \text{Hess} \left(- \log \varphi_i \right)\right] \right)^{\lambda}\right]^\frac{1}{n} \\
&&\hskip 25mm \left[ \varphi _k \   \left(   
\frac{e^{\frac{\langle\grad\varphi_k, x \rangle}{\varphi_k}}}{\varphi_k^2}  \  \mbox{det} \left[   \text{Hess} \left(- \log \varphi_k \right)\right] \right)^{\lambda}\right]^\frac{m}{n} dx.
\end{eqnarray*}
\vskip 2mm
\begin{prop}\label{prop: lambda mult}
For $1 \leq i \leq n$,  let $ \varphi_i:\R^{n}\rightarrow [0, \infty)$ be  
log-concave functions and let $- \infty < \lambda <  \infty$ . Then, if $ 1 \leq m \leq n-1$, 
\begin{eqnarray*}
 \left[ as_{\lambda} \left( \vec \varphi \right) \right]^m  \leq  
\prod_{k=n-m+1}^{ n }  as_{ \lambda} ( \vec \varphi_{m,k}).
\end{eqnarray*}
In particular, if $m=n$,
\begin{equation*}\label{multiplication-property}
\left[as_{\lambda} (\vec \varphi) \right]^{n} \leq \prod_{k=1}^n   as_{ \lambda } ( \varphi_k).
\end{equation*} 
\noindent
The equality characterization is the same as in Theorem \ref{Alexandrov-Fenchel}. 
\end{prop}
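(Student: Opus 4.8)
The plan is to read off $as_\lambda(\vec\varphi)$ as a mixed $f$-divergence and then quote Theorem \ref{Alexandrov-Fenchel}. Concretely, I would set $f_i(t) = t^\lambda$ for every $i$ in Definition \ref{mixed-f-div-log}; then the defining integral (\ref{mixed-div-Logconcave1}) collapses exactly to the expression (\ref{mixed-asp-Logconcave}), so that $as_\lambda(\vec\varphi) = D_{\vec f}(P_{\vec\varphi}, Q_{\vec\varphi})$ for this particular $\vec f$. With the same substitution, the mixed divergence $D_{\vec f_{m,k}}(P_{\vec\varphi_{m,k}}, Q_{\vec\varphi_{m,k}})$ appearing on the right of Theorem \ref{Alexandrov-Fenchel} coincides term by term with the quantity written $as_\lambda(\vec\varphi_{m,k})$ just above the proposition, and $D_{f_k}(P_{\varphi_k}, Q_{\varphi_k})$ becomes $as_\lambda(\varphi_k)$.

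The one hypothesis of Theorem \ref{Alexandrov-Fenchel} I would need to verify is that the chosen $f_i(t) = t^\lambda$ are either all convex or all concave on $(0,\infty)$. This is immediate from $\frac{d^2}{dt^2}t^\lambda = \lambda(\lambda-1)t^{\lambda-2}$: the power $t^\lambda$ is convex precisely when $\lambda \in (-\infty,0]\cup[1,\infty)$ and concave precisely when $\lambda\in[0,1]$, and in both regimes it takes values in $\R_+$. Hence for every real $\lambda$ the constant vector $\vec f = (t^\lambda,\dots,t^\lambda)$ meets the requirement, and I can apply the two inequalities of Theorem \ref{Alexandrov-Fenchel} (the case $1\leq m\leq n-1$ and the case $m=n$) and simply translate them into the $as_\lambda$ notation to obtain the two displayed inequalities.

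For the equality statement I would trace the functions (\ref{h0}) and (\ref{hj}) through the substitution $f_i(t)=t^\lambda$: they become the corresponding products of the integrands of the affine surface area, so the equality condition of Theorem \ref{Alexandrov-Fenchel} --- that some $h_0^{1/m}h_j$ (or some $h_j$ when $m=n$) vanishes, or that all of them are effectively proportional --- is inherited verbatim, which is exactly what ``the same as in Theorem \ref{Alexandrov-Fenchel}'' asserts. I do not expect a genuine obstacle here; the only steps requiring mild care are the sign analysis of $\lambda(\lambda-1)$ that secures convexity or concavity for every $\lambda\in\R$, and the clerical matching of the repeated-index notation $\vec f_{m,k}$, $\vec\varphi_{m,k}$ against the definition of $as_\lambda(\vec\varphi_{m,k})$.
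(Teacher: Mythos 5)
Your proposal is correct and matches the paper's approach exactly: the paper states Proposition \ref{prop: lambda mult} as a direct consequence of Theorem \ref{Alexandrov-Fenchel}, obtained by taking $f_i(t)=t^\lambda$ in Definition \ref{mixed-f-div-log}, which is precisely your substitution. Your added verification that $t^\lambda$ is convex for $\lambda\in(-\infty,0]\cup[1,\infty)$ and concave for $\lambda\in[0,1]$ (so the vector $\vec f$ is always all-convex or all-concave) is the only detail the paper leaves implicit, and you handle it correctly.
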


\vskip 3mm
Next, we prove affine isoperimetric inequalities for the  mixed $L_\lambda$-affine surface area. 
\begin{prop}
For  $1 \leq i \leq n$, let $ \varphi_i:\R^{n}\rightarrow [0, \infty)$ be  
log-concave functions such that $\varphi_i $ has barycenter at $0$.  
 If $ \lambda \in [0,1]$, then
\begin{equation}\label{isoperInequ}
\left[ \frac{as_{\lambda} (\vec \varphi) }{ as_{\lambda} (g, \cdots, g) } \right]^{n} \leq \ \prod_{i=1}^n \left( \frac{ \int \varphi_i}{ \int g} \right)^{1 -2 \lambda}   .
\end{equation} 
where $g(x) = e^{-\frac{\| x \|^2}{2}}$. Equality holds if  and only if $\varphi_i = c_i e^{ - \frac{1}{2} \langle Ax, x \rangle }$ where $c_i >0$, $1 \leq i \leq n$,  and $A$ is a $(n \times n)$  positive definite matrix.
\end{prop}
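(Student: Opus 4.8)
The plan is to read the mixed $L_\lambda$-affine surface area as the mixed $f$-divergence with $f_i(t) = t^\lambda$ for every $i$, and then to apply the entropy inequality of Theorem \ref{mixed-entropy}. The point is that for $\lambda \in [0,1]$ the function $t \mapsto t^\lambda$ is concave on $(0,\infty)$, so Theorem \ref{mixed-entropy} applies verbatim and yields
\begin{equation*}
\left[ as_\lambda(\vec\varphi) \right]^n \ \leq \ \prod_{i=1}^n \left( \frac{\int \varphi_i^\circ\, dx}{\int \varphi_i\, dx} \right)^{\!\lambda} \left( \int \varphi_i\, dx \right).
\end{equation*}
This is the only genuine inequality in the argument; the remaining steps are a normalization.

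Next I would use that each $\varphi_i$ has barycenter at $0$ to invoke the functional Blaschke Santal\'o inequality exactly as in the proof of Corollary \ref{Korrolar2}, giving $\int \varphi_i^\circ\, dx \leq (2\pi)^n / \int \varphi_i\, dx$. Because $\lambda \geq 0$, I may raise this to the power $\lambda$ without reversing the inequality, and substituting into the display above and collecting the powers of $2\pi$ produces $\left[ as_\lambda(\vec\varphi) \right]^n \leq \prod_{i=1}^n (2\pi)^{n\lambda} \left( \int \varphi_i\, dx \right)^{1-2\lambda}$. To recast this in the normalized form (\ref{isoperInequ}) I would compute the two reference quantities for $g(x) = e^{-\|x\|^2/2}$ directly from formula (\ref{affineGaussian}) with $A_i = \mathrm{Id}$ and $c_i = 1$: one finds $as_\lambda(g, \cdots, g) = (2\pi)^{n/2}$, independent of $\lambda$, and this is also the value of $\int g\, dx$. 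Feeding $\left[ as_\lambda(g,\cdots,g) \right]^n = (2\pi)^{n^2/2}$ and $\int g\, dx = (2\pi)^{n/2}$ back in balances the powers of $2\pi$ precisely and gives (\ref{isoperInequ}).

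Finally I would settle the equality case in both directions. For the reverse direction, assuming $\varphi_i = c_i e^{-\frac12\langle Ax, x\rangle}$ with one common positive definite $A$, I would evaluate both sides via (\ref{affineGaussian}) and check that the exponents of $\det A$ and of the $c_i$ agree, so that equality holds. For the forward direction, equality in (\ref{isoperInequ}) forces equality in the step supplied by Theorem \ref{mixed-entropy}, and I expect this to be the main obstacle: the per-index equality conditions only give $\varphi_i = c_i e^{-\frac12\langle A_i x, x\rangle}$ with possibly different matrices $A_i$, and upgrading this to a single common $A$ is the delicate point. I would handle it as in the proof of Theorem \ref{mixed-entropy}, where the equality identity $\prod_i (\det A_i)^{1/n} = \det(\sum_i A_i)/n^n$ together with the Brunn Minkowski inequality for matrices and the arithmetic geometric mean inequality forces all $A_i$ to be equal, completing the characterization.
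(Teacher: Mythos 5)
Your proof is correct, but it takes a somewhat different route from the paper's. The paper proves (\ref{isoperInequ}) in two quoted steps: Proposition \ref{prop: lambda mult} (the Alexandrov--Fenchel inequality specialized to $f_i(t)=t^\lambda$) reduces the mixed quantity to the product of the individual $as_{\lambda}(\varphi_i)$, and then the normalized inequality (\ref{ineq-CFGLSW}) from \cite{CFGLSW}, together with its equality characterization, is applied to each factor. You instead stay inside the present paper: Theorem \ref{mixed-entropy} (whose proof already contains the same Alexandrov--Fenchel step plus the per-function entropy bound (\ref{thm0,1})), combined with the functional Blaschke--Santal\'o inequality exactly as in Corollary \ref{Korrolar2} --- legitimate, since $t\mapsto t^{\lambda}$ is concave and nonnegative for $\lambda\in[0,1]$ and the barycenter hypothesis is available --- and then the explicit Gaussian computation from (\ref{affineGaussian}) giving $as_{\lambda}(g,\dots,g)=(2\pi)^{n/2}=\int g\,dx$. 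Your exponent bookkeeping is exact: both your bound and the right-hand side of (\ref{isoperInequ}) reduce to $(2\pi)^{n^2\lambda}\prod_{i=1}^n\left(\int\varphi_i\,dx\right)^{1-2\lambda}$, so no inequality is lost in the normalization. In effect you re-derive the external corollary (\ref{ineq-CFGLSW}) from ingredients internal to the paper; this buys self-containedness and makes visible that the barycenter assumption enters only through Blaschke--Santal\'o, at the cost of the normalization computation that the paper's citation hides. On the equality case, your forward direction can be shortened: since equality in (\ref{isoperInequ}) forces equality in the Theorem \ref{mixed-entropy} step, you may simply invoke the ``if and only if'' of that theorem, which already yields a single common matrix $A$; the upgrade you sketch via the determinant identity, the Brunn--Minkowski inequality for matrices and the arithmetic-geometric mean inequality (i.e., (\ref{BM+GA})) is precisely what sits inside the proof of Theorem \ref{mixed-entropy} --- and is also how the paper concludes --- so it is available but redundant. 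Your converse direction, checking equality for $\varphi_i=c_i e^{-\frac12\langle Ax,x\rangle}$ via (\ref{affineGaussian}), matches the paper's.
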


\begin{proof}
By  Proposition \ref{prop: lambda mult}, 
$$
\left[\frac{as_{\lambda} (\vec \varphi) }{as_{\lambda} (g, \cdots, g) } \right]^{n} \leq \ \prod_{i=1}^n  \frac{ as_{ \lambda } ( \varphi_i)}{ as_{\lambda} (g) } \ \leq \ \prod_{i=1}^n \left( \frac{ \int \varphi_i}{ \int g} \right)^{1 -2 \lambda} .
$$
The last part follows from a corollary  in \cite{CFGLSW},  which says  that for a log-concave function $ \varphi:\R^{n}\rightarrow [0, \infty)$ with barycenter at $0$,
\begin{eqnarray}\label{ineq-CFGLSW}
 \frac{ as_{ \lambda } ( \varphi)}{ as_{\lambda} (g) } \ \leq \  \left( \frac{ \int \varphi}{ \int g} \right)^{1 -2 \lambda}.
\end{eqnarray} 
It was proved  in  \cite{CFGLSW} that equality  holds  if and only if $\varphi(x)=c e^{-\frac{1}{2} \langle A x, x \rangle}$ where $c>0$ is a constant and $A$ is a $(n \times n)$  positive definite matrix.
\par
Using (\ref{affineGaussian}), it is easy to see that equality holds in (\ref{isoperInequ})  if $\varphi_i (x)= c_i e^{-\frac{1}{2} \langle A x, x \rangle}$, where
 $c_i$ is a positive constant  and $A$ is a $(n \times n)$  positive definite  matrix.
On the other hand, if equality holds in (\ref{isoperInequ}), then equality holds in particular, for all $i$, in the inequality (\ref{ineq-CFGLSW}) which means that 
for all $i$, $\varphi_i(x) =  c_i e^{-\frac{1}{2} \langle A_i x, x \rangle}$, where
 $c_i$ is a positive constant  and $A_i$ is a $(n \times n)$  positive definite  matrix. Thus, as before,  
the equality condition again leads to the  identity
 \begin{equation*}
  \prod_{i=1}^{ n}  \left(  \det (A_i ) \right) ^\frac{1}{n} = \frac{\det \left(\sum_{i=1}^n A_i\right) }{n^n}
\end{equation*} 
and we conclude as before.

\end{proof}

We also have a  Blaschke Santal\'o type inequality.
\vskip 2mm
\begin{prop}\label{duality-mixed-log}
For $1 \leq i \leq n$, let $ \varphi_i:\R^{n}\rightarrow [0, \infty)$ be  
log-concave functions such that $\varphi_i $ has barycenter at 0.  
 If $ \lambda \in [0,1]$, then
\begin{equation}\label{BS-mixed}
 as_{\lambda} (\vec{ \varphi})  as_{\lambda} ( \vec{\varphi^\circ}) \  \leq \ (2 \pi)^n    .
\end{equation} 
Equality holds if  and only if $\varphi_i = c_i e^{ -\frac{1}{2}  \langle Ax, x \rangle }$ where $c_i >0$, $1 \leq i \leq n$,   and $A$ is a $(n \times n)$  positive definite matrix.
\end{prop}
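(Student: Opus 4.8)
The plan is to deduce the mixed inequality from the one--function situation, since the only true duality available for several functions, Proposition~\ref{lambda-duality}, requires all $\varphi_i$ to be multiples of one common function, which we are not given here. A naive attempt to feed both $\vec\varphi$ and $\vec{\varphi^\circ}$ into the isoperimetric inequality (\ref{isoperInequ}) fails on two counts: the duals $\varphi_i^\circ$ need not have barycenter at $0$, and, more seriously, the exponent $1-2\lambda$ changes sign at $\lambda=\tfrac12$, so that after invoking the functional Blaschke--Santal\'o inequality $\big(\int\varphi_i\big)\big(\int\varphi_i^\circ\big)\le(2\pi)^n$ the bound points the wrong way for $\lambda>\tfrac12$. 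The remedy is to play $\lambda$ against $1-\lambda$ so that these exponents cancel, and to descend to single functions through multiplicativity rather than duality.

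First I would establish the single--function version: for log concave $\varphi$ with barycenter at $0$ and $\lambda\in[0,1]$, one has $as_\lambda(\varphi)\,as_\lambda(\varphi^\circ)\le(2\pi)^n$. Since $1-\lambda\in[0,1]$ as well, (\ref{ineq-CFGLSW}) may be applied to $\varphi$ at \emph{both} parameters, giving $as_\lambda(\varphi)\le as_\lambda(g)\,(\int\varphi/\!\int g)^{1-2\lambda}$ and $as_{1-\lambda}(\varphi)\le as_{1-\lambda}(g)\,(\int\varphi/\!\int g)^{2\lambda-1}$. Because $g(x)=e^{-\|x\|^2/2}$ is self dual with affine surface area density identically $1$, so that $as_\mu(g)=\int g=(2\pi)^{n/2}$ for every $\mu$, multiplying the two inequalities makes the powers of $\int\varphi/\!\int g$ annihilate and yields $as_\lambda(\varphi)\,as_{1-\lambda}(\varphi)\le(2\pi)^n$. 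The single--function duality $as_{1-\lambda}(\varphi)=as_\lambda(\varphi^\circ)$ of \cite{CFGLSW} then gives the claim, and equality forces equality in (\ref{ineq-CFGLSW}) at both parameters, which by \cite{CFGLSW} means $\varphi(x)=c\,e^{-\frac12\langle A x,x\rangle}$.

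Next I would combine this with the multiplicativity of the mixed $L_\lambda$--affine surface area, Proposition~\ref{prop: lambda mult} with $m=n$, which carries no barycenter hypothesis. Applying it to $\vec\varphi$ and to $\vec{\varphi^\circ}$ separately and multiplying gives
\[
\big[as_\lambda(\vec\varphi)\,as_\lambda(\vec{\varphi^\circ})\big]^n\ \le\ \prod_{i=1}^n as_\lambda(\varphi_i)\ \prod_{i=1}^n as_\lambda(\varphi_i^\circ)\ =\ \prod_{i=1}^n \big[as_\lambda(\varphi_i)\,as_\lambda(\varphi_i^\circ)\big].
\]
Each $\varphi_i$ has barycenter at $0$, so the single--function inequality bounds every factor by $(2\pi)^n$; the product is at most $(2\pi)^{n^2}$, and taking $n$-th roots yields (\ref{BS-mixed}).

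For the equality characterization I would argue as in the proofs of Theorem~\ref{mixed-entropy} and the preceding proposition. The easy direction follows by inserting $\varphi_i=c_ie^{-\frac12\langle A x,x\rangle}$ and $\varphi_i^\circ=c_i^{-1}e^{-\frac12\langle A^{-1}x,x\rangle}$ into (\ref{affineGaussian}): the powers of $c_i$ and of $\det A$ cancel between $as_\lambda(\vec\varphi)$ and $as_\lambda(\vec{\varphi^\circ})$, and the product is exactly $(2\pi)^n$. Conversely, equality propagates up the chain. It forces equality in the single--function inequality for each $i$, hence $\varphi_i=c_ie^{-\frac12\langle A_i x,x\rangle}$ with a priori distinct $A_i$, and it forces equality in Proposition~\ref{prop: lambda mult} for $\vec\varphi$. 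For these Gaussians the H\"older integrands reduce, up to positive constants, to $e^{-\frac1{2n}\langle A_i x,x\rangle}$, and two such are effectively proportional only when $A_i=A_j$; since none is null, all $A_i$ coincide. (Equivalently, evaluating the product for general $A_i$ via (\ref{affineGaussian}) reduces equality to $\det\big(\sum_i A_i\big)\det\big(\sum_i A_i^{-1}\big)=n^{2n}$, which by the matrix Brunn--Minkowski inequality \cite{CoverThomas2006, KyFan, MINKOWSKI} and the arithmetic--geometric mean inequality forces all $A_i$ equal.) The main obstacle is precisely the sign change of $1-2\lambda$: the device of running (\ref{ineq-CFGLSW}) simultaneously at $\lambda$ and $1-\lambda$ so the exponents cancel, together with using multiplicativity instead of duality to avoid imposing a barycenter condition on the $\varphi_i^\circ$, is what makes the argument go through uniformly for all $\lambda\in[0,1]$.
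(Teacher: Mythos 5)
Your proof is correct and follows essentially the same route as the paper: the paper likewise reduces (\ref{BS-mixed}) to the single-function inequality $as_{\lambda}(\varphi_i)\,as_{\lambda}(\varphi_i^\circ)\le(2\pi)^n$ via Proposition \ref{prop: lambda mult} with $m=n$, and settles the equality case by identifying each $\varphi_i$ as a Gaussian through equality in (\ref{asa-BS}) and then forcing all matrices $A_i$ to coincide via (\ref{BM+GA}). The only difference is that the paper cites the single-function Blaschke--Santal\'o inequality (\ref{asa-BS}) directly from \cite{CFGLSW}, whereas you re-derive it by running (\ref{ineq-CFGLSW}) at both $\lambda$ and $1-\lambda$ and invoking the duality $as_{1-\lambda}(\varphi)=as_{\lambda}(\varphi^\circ)$ --- a valid but unnecessary detour, since all of these ingredients come from the same source.
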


\begin{proof}
By Proposition \ref{prop: lambda mult}, 
$$\left[ as_{\lambda} ( \vec \varphi)  as_{\lambda} ( \vec{\varphi^\circ})  \right]^{n} \leq \prod_{i=1}^n   as_{\lambda} ( \varphi_i) as_{\lambda} ( \varphi_i^\circ) .
$$
The following Blaschke Santal\'o type inequality was proved in \cite{CFGLSW},  
\begin{equation}\label{asa-BS}
 as_{\lambda} ( \varphi) as_{\lambda} ( \varphi^\circ)  \leq (2 \pi)^n , 
\end{equation} 
where $\varphi$ is a log-concave function with barycenter at $0$. It was proved  in  \cite{CFGLSW} that equality  holds  if and only if $\varphi(x)=c e^{-\frac{1}{2} \langle A x, x \rangle}$ where $c>0$ is a constant and $A$ is a $(n \times n)$  positive definite matrix. Thus, the statement of the theorem follows. 
 By the duality formula (\ref{mixed-affine-polar-identity}) and  (\ref{affineGaussian}), it is easy to see that equality holds in (\ref{BS-mixed})  if $\varphi_i (x)= c_i e^{-\frac{1}{2} \langle A x, x \rangle}$.
On the other hand, if equality holds in (\ref{BS-mixed}), then equality holds in particular, for all $i$, in the inequality (\ref{asa-BS}) which means that 
for all $i$, $\varphi_i(x) =  c_i e^{-\frac{1}{2} \langle A_i x, x \rangle}$, where
 $c_i$ is a positive constant  and $A_i$ is a $(n \times n)$  positive definite  matrix. Note that for $\varphi_i(x) =  c_i e^{-\frac{1}{2} \langle A_i x, x \rangle}$, the dual function is  $\varphi_i^\circ (x) =  c_i^{-1} e^{-\frac{1}{2} \langle A_i^{-1} x, x \rangle}$. Thus, also using (\ref{affineGaussian}), the
equality condition leads to the following identity
 \begin{equation}\label{newidentity2}
\left( \det (A_1 + \cdots + A_n) \ \det (A_1^{-1} + \cdots + A_n^{-1} )  \right)^{\frac{1}{2}} = n^n
\end{equation} 
Therefore, by (\ref{BM+GA}), we must have for all 
$i$, $A_i= \lambda A$, for some $\lambda>0$ and for some positive definite matrix $A$.
Hence we have that $\varphi_i(x) = c_i e^{ - \langle A x , x \rangle / 2  }$.
\end{proof}

\vskip 2mm
The next proposition gives a monotonicity behavior of the  mixed $L_\lambda$-affine surface area. The proofs follow by  H\"older's inequality (see also \cite{CaglarWerner}).
\begin{prop} \label{theo:mono1}
Let  $\alpha \neq \beta,  \lambda \neq \beta$ be
real numbers. Let $\varphi_1, \cdots, \varphi_n: \R^{n}\rightarrow [0, \infty) $ be log concave functions.
\par
(i) If $1 \leq \frac{\alpha-\beta}{\lambda-\beta} < \infty$, then 
$
as_{\lambda } (\vec \varphi)\leq \big(as_{\alpha}  (\vec \varphi) \big)^{\frac{\lambda-\beta}{\alpha-\beta}}
\big(as_{\beta} (\vec \varphi)\big)^{\frac{\alpha-\lambda}{\alpha-\beta}}.
$
\par
(ii) If $1 \leq \frac{\alpha}{ \lambda} < \infty$, then 
$
as_{\lambda}(\vec \varphi) \leq \left(as_{\alpha}( \vec \varphi)\right)^\frac{\lambda}{\alpha} \big(\int (\varphi_1 \cdots \varphi_2)^{\frac{1}{n}} \big)^{\frac{\alpha-\lambda}{\alpha}}.
$
\par
(iii)  If $ \beta \leq \lambda $, then 
$ as_{\lambda }(\vec \varphi) \leq  \big(as_{ \infty  }(\vec \varphi) \big)^{ \lambda - \beta}
 \ as_{\beta  } ( \vec \varphi).
$ 
\par
\noindent
If $ \frac{\alpha-\beta}{\lambda-\beta}=1$ in (i),  respectively $\frac{\alpha}{\lambda}=1$ in (ii), then $\alpha=\lambda$ and equality holds trivially in (i)
respectively (ii). 
Equality also holds if for $1\leq i \leq n$, $\varphi_i(x)= c_i e^{- \frac{1}{2} \langle A_i x,x\rangle}$,  where
 $c_i$ is a positive constant  and $A_i$ is a $(n \times n)$  positive definite  matrix.
\end{prop}
\vskip 3mm

It follows from  Proposition \ref{theo:mono1} (ii) that for $0 < \lambda \leq \alpha$,
$$
0 \leq \left(\frac{as_{\lambda}(\vec \varphi) }{ \int (\varphi_1 \cdots \varphi_n )^{\frac{1}{n}} dx }\right)^\frac{1}{\lambda} \leq \left( \frac{as_{\alpha } (\vec \varphi) }{ \int (\varphi_1 \cdots \varphi_n )^{\frac{1}{n}} dx } \right)^{\frac{ 1}{ \alpha}},
$$
which means that  for $\lambda >0$ the function 
$\left(\frac{as_{\lambda }(\vec \varphi) }{ \int  \left(\varphi_1 \cdots \varphi_n \right)^{\frac{1}{n}}dx }    \right)^\frac{1}{\lambda} $  is bounded below by $0$ and
is  increasing for $ \lambda > 0$. 
Therefore, the  limit 
\begin{equation} \label{omega}
\Omega_{\vec \varphi } =  \lim _{ \lambda \downarrow 0}  \left(\frac{as_{\lambda }(\vec \varphi) }{ \int  \left(\varphi_1 \cdots \varphi_n \right)^{\frac{1}{n}} dx }\right)^\frac{1}{\lambda}
\end{equation} 
exists and the quantity $\Omega_{\vec \varphi }$ is  invariant under self adjoint $SL(n)$ maps. This quantity was first introduced by Paouris and Werner in \cite{PaourisWerner2011} for convex bodies, then 
by Caglar and Werner  \cite{CaglarWerner} for log concave functions using  $L_\lambda$-affine surface area.
It also follows from  Proposition \ref{theo:mono1} (ii) that for $\lambda <0$, the  function $ \lambda \rightarrow   \left(\frac{as_{\lambda }(\vec \varphi) }{ \int \left( \varphi_1 \cdots \varphi_n \right)^{\frac{1}{n}} dx }\right)^\frac{1}{\lambda} $  is  increasing. Therefore, $\lim _{ \lambda \uparrow 0}  \left(\frac{as_{\lambda }(\vec \varphi) }{ \int \left(\varphi_1 \cdots \varphi_n \right)^{\frac{1}{n}} dx }\right)^\frac{1}{\lambda} $ exists and,  in fact, is equal to $\Omega_{\vec \varphi } $.
\vskip 3mm

The quantity $\Omega_{\vec \varphi }$  is related to the relative entropy as follows.
\vskip 2mm
\begin{prop}\label{affineinvariant-prop}
Let $\varphi_i: \R^{n}\rightarrow [0, \infty) $ be log concave functions, $i=1, \cdots, n$. Then
$$ \Omega_{\vec \varphi } = \exp
\left[ \frac{ D_{KL} \left( P_{\prod_{i=1}^n  \varphi_i^{\frac{1}{n} } }   || Q _{\prod_{i=1}^n  \varphi_i^{\frac{1}{n}}}\right) }{\int \prod_{i=1}^n \varphi_i^\frac{1}{n} dx} + 
 \hskip -2mm {\displaystyle \int }   \log \left( 
 \frac{\prod_{i=1}^n \left( \mbox{det} \left[ \text{Hess} \left(-\log \varphi_i \right)\right]\right) ^{\frac{1}{n}}}
 {\mbox{det} \left[ \frac{1}{n} \sum_{i=1}^n \text{Hess} \left(-\log \varphi_i \right)\right]   }
\right) d \mu
 \right] \hskip -1mm,$$
where $d \mu = \frac{ \prod_{i=1}^n  \varphi_i^{\frac{1}{n}} dx}{ \int \prod_{i=1}^n  \varphi_i^{\frac{1}{n}} dx}$. 
\end{prop}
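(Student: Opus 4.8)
The plan is to reduce the defining limit (\ref{omega}) to a first-order expansion in $\lambda$ and then to split the resulting integrand into a relative-entropy part and a determinant part. Throughout, write $G=\prod_{i=1}^n\varphi_i^{1/n}$, which is again log concave since $-\log G=\frac1n\sum_{i=1}^n(-\log\varphi_i)$ is convex, set $N=\int G\,dx$ and $d\mu=G\,dx/N$, and abbreviate $T_i=\frac{e^{\langle\nabla\varphi_i,x\rangle/\varphi_i}}{\varphi_i^{2}}\det[\text{Hess}(-\log\varphi_i)]=p_{\varphi_i}/q_{\varphi_i}$. First I would rewrite the mixed affine surface area (\ref{mixed-asp-Logconcave}) as
\[
as_\lambda(\vec\varphi)=\int G(x)\prod_{i=1}^n T_i(x)^{\lambda/n}\,dx=\int G(x)\,e^{\lambda h(x)}\,dx,\qquad h=\frac1n\sum_{i=1}^n\log T_i,
\]
so that $as_\lambda(\vec\varphi)/N=\int e^{\lambda h}\,d\mu$ is the expectation of $e^{\lambda h}$ under the probability measure $\mu$.

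Since the limit in (\ref{omega}) is already known to exist (by the monotonicity coming from Proposition \ref{theo:mono1}(ii)), it suffices to evaluate it. Taking logarithms and using $as_0(\vec\varphi)=N$,
\[
\log\Omega_{\vec\varphi}=\lim_{\lambda\downarrow0}\frac{1}{\lambda}\log\!\int e^{\lambda h}\,d\mu=\int h\,d\mu,
\]
the last equality being the elementary expansion $\int e^{\lambda h}\,d\mu=1+\lambda\int h\,d\mu+o(\lambda)$ (equivalently, L'H\^opital applied to the numerator, whose $\lambda$-derivative at $0$ is $\int h\,d\mu$). Thus $\log\Omega_{\vec\varphi}=\frac1n\int\sum_{i=1}^n\log T_i\,d\mu$.

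It then remains to split $h$. Using $\langle\nabla\log G,x\rangle=\frac1n\sum_i\langle\nabla\varphi_i,x\rangle/\varphi_i$, $\,-2\log G=\frac1n\sum_i(-2\log\varphi_i)$ and $\text{Hess}(-\log G)=\frac1n\sum_i\text{Hess}(-\log\varphi_i)$, a direct computation gives
\[
h=\log T_G+\log\frac{\prod_{i=1}^n\bigl(\det[\text{Hess}(-\log\varphi_i)]\bigr)^{1/n}}{\det\bigl[\tfrac1n\sum_{i=1}^n\text{Hess}(-\log\varphi_i)\bigr]},\qquad T_G=\frac{e^{\langle\nabla G,x\rangle/G}}{G^{2}}\det[\text{Hess}(-\log G)]=\frac{p_G}{q_G},
\]
since the gradient and the $-2\log$ terms recombine exactly into those of $G$, leaving only the determinant defect. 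Integrating against $\mu$ and recognizing $\frac1N\int G\log T_G\,dx=D_{KL}(P_G\|Q_G)/\int G\,dx$ — the relative entropy of the log concave function $G$, i.e. (\ref{div-Logconcave1}) with $f(t)=\log t$ — yields the two claimed summands after exponentiation.

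The main obstacle is the justification of the first-order expansion, i.e. interchanging $\lambda\downarrow0$ with the integral (equivalently, differentiating under the integral sign); this is controlled by the standing smoothness and integrability hypotheses (\ref{assume2}), which dominate $|h|\,e^{\lambda h}G$ uniformly for $\lambda$ near $0$. The only genuinely algebraic point is the verification that the per-function data $T_i$ combine, under $\frac1n\sum_i\log(\cdot)$, into the data $T_G$ of the geometric-mean function $G$ up to the determinant correction; here the log concavity of $G$ is what makes $T_G=p_G/q_G$ a legitimate divergence integrand, and concavity of $\log\det$ identifies the correction as the natural Jensen-type defect.
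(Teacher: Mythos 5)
Your proposal is correct and follows essentially the same route as the paper: the paper also computes $\log\Omega_{\vec\varphi}$ by l'H\^opital (your first-order expansion of $\int e^{\lambda h}\,d\mu$ is the same step, just phrased multiplicatively), and then performs the identical split of the integrand into the relative-entropy term for $\prod_i\varphi_i^{1/n}$ plus the Jensen-type determinant defect $\log\bigl(\prod_i(\det[\mathrm{Hess}(-\log\varphi_i)])^{1/n}/\det[\tfrac1n\sum_i\mathrm{Hess}(-\log\varphi_i)]\bigr)$. Your added remark on dominating $|h|e^{\lambda h}G$ to justify differentiation under the integral is a point the paper leaves to its standing integrability assumptions.
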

\vskip 2mm
\begin{proof}[Proof]
By definition and de l'H\^ospital, 
\begin{eqnarray*}
 \Omega_{\vec \varphi } &=&  \lim _{ \lambda \downarrow 0}  \left(\frac{as_{\lambda }(\vec \varphi) }{ \int  \left(\varphi_1 \cdots \varphi_n \right)^{\frac{1}{n}} dx }\right)^\frac{1}{\lambda}
 = \lim _{ \lambda \downarrow 0} \  \exp \left( \frac{1}{\lambda} 
   \log \left( \frac{as_{\lambda }(\vec \varphi) }{ \int  \left(\varphi_1 \cdots \varphi_n \right)^{\frac{1}{n}} dx } \right) 
 \right)  \\ 
 &=& \exp \left(  \lim _{ \lambda \downarrow 0}  \frac{ {\displaystyle \int }  \frac{d}{d\lambda} \prod_{i=1}^n    \bigg[ \varphi_i  \  \left(\frac{e^{\frac{\langle\grad\varphi_i , x \rangle}{\varphi_i}}}{\varphi_i^2}  \  \mbox{det} \left[  \text{Hess} \left(-\log \varphi_i \right)\right]  \right)^\lambda \bigg]^{\frac{1}{n}} dx}{as_\lambda(\vec \varphi)} 
 \right) \\
&=& \exp \left(   \frac{ {\displaystyle \int }  \prod_{i=1}^n  \varphi_i^{\frac{1}{n}} \  \log \left[ \prod_{i=1}^n   \left(\frac{e^{\frac{\langle\grad\varphi_i, x \rangle}{\varphi_i }}}{\varphi_i ^2}  \  \mbox{det} \left[  \text{Hess} \left(-\log \varphi_i \right)\right] \right)^{\frac{1}{n}}  \right] dx }{ \int  \left(\varphi_1 \cdots \varphi_n \right)^{\frac{1}{n}}   dx } 
 \right)  .
 \end{eqnarray*}
Now we treat the exponent further. As $q_{\prod_{i=1}^n  \varphi_i^{\frac{1}{n}}}= \prod_{i=1}^n  \varphi_i^{\frac{1}{n}} = \prod_{i=1}^n q_{ \varphi_i^{\frac{1}{n}}}$ and 

$$
p_{\prod_{i=1}^n  \varphi_i^{\frac{1}{n}}} = \prod_{i=1}^n   \frac{e^{\frac{1}{n} \frac{\langle\grad\varphi_i, x \rangle}{\varphi_i }}}{\varphi_i ^{\frac{1}{n}} }  \  
\mbox{det} \left[  \text{Hess} \left(-\log  \prod_{i=1}^n \varphi_i^{\frac{1}{n}} \right)\right]  , 
$$
we get that 
\begin{eqnarray*}
&& {\displaystyle \int }   \prod_{i=1}^n  \varphi_i^{\frac{1}{n}} \  \log \left[ \prod_{i=1}^n   \left(\frac{e^{\frac{\langle\grad\varphi_i, x \rangle}{\varphi_i }}}{\varphi_i ^2}  \  \mbox{det} \left[  \text{Hess} \left(-\log \varphi_i \right)\right] \right)^{\frac{1}{n}}  \right] dx   \\
&& = {\displaystyle \int}  q_{\prod_{i=1}^n  \varphi_i^{\frac{1}{n}}} \log\left( \frac{p_{\prod_{i=1}^n  \varphi_i^{\frac{1}{n}}}}{q_{\prod_{i=1}^n  \varphi_i^{\frac{1}{n}}}} 
\    \frac{\prod_{i=1}^n \left( \mbox{det} \left[  \text{Hess} \left(-\log \varphi_i \right)\right]\right) ^{\frac{1}{n}}}
 {\mbox{det} \left[  \text{Hess} \left(-\log  \prod_{i=1}^n \varphi_i ^\frac{1}{n}\right)\right] }
 \right) dx\\
 && = D_{KL} \left( P_{\prod_{i=1}^n  \varphi_i^{\frac{1}{n} } }   || Q _{\prod_{i=1}^n  \varphi_i^{\frac{1}{n}}}\right) + 
 {\displaystyle \int }  \prod_{i=1}^n  \varphi_i^{\frac{1}{n}} \  \log \left( 
 \frac{\prod_{i=1}^n \left( \mbox{det} \left[ \text{Hess} \left(-\log \varphi_i \right)\right]\right) ^{\frac{1}{n}}}
 {\mbox{det} \left[ \frac{1}{n} \sum_{i=1}^n \text{Hess} \left(-\log \varphi_i \right)\right]  }
\right).
\end{eqnarray*}
\end{proof}
\vskip 3mm
\begin{cor} 
Let $\varphi_i: \R^{n}\rightarrow [0, \infty) $ be log concave functions, $i=1, \cdots, n$. Then
$$
\log\  \left(\Omega_{\vec \varphi } \right) \leq  \frac{ D_{KL} \left( P_{\prod_{i=1}^n  \varphi_i^{\frac{1}{n} } }   || Q _{\prod_{i=1}^n  \varphi_i^{\frac{1}{n}}}\right) }{\int \prod_{i=1}^n \varphi_i^\frac{1}{n} dx}.
$$
If $n=1$, equality holds trivially. Otherwise, 
equality holds if  and only if one of  $\text{Hess} \left(-\log \varphi_i \right)$, $1 \leq i \leq n$,  is null or all 
are effectively proportional.
\end{cor}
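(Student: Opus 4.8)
The plan is to read the inequality directly off the exact identity for $\Omega_{\vec\varphi}$ proved in Proposition \ref{affineinvariant-prop}. Write $M_i = \text{Hess}\left(-\log\varphi_i\right)$, which is positive semidefinite because $\varphi_i$ is log concave; under the standing smoothness and integrability hypotheses we may take $M_i$ positive definite on the support of $\prod_i\varphi_i$. That proposition expresses $\log\Omega_{\vec\varphi}$ as the sum of $\frac{D_{KL}\left(\cdots\right)}{\int\prod_{i=1}^n\varphi_i^{1/n}\,dx}$ and the error integral
\[
\int \log\left( \frac{\prod_{i=1}^n (\det M_i)^{1/n}}{\det\left(\frac{1}{n}\sum_{i=1}^n M_i\right)} \right) d\mu ,
\]
where $d\mu = \frac{\prod_i\varphi_i^{1/n}\,dx}{\int\prod_i\varphi_i^{1/n}\,dx}$ is a probability measure. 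Hence the asserted inequality is exactly the statement that this error integral is $\leq 0$, and since $\mu$ is a nonnegative measure it suffices to prove the pointwise determinant inequality $\prod_{i=1}^n (\det M_i)^{1/n} \leq \det\left(\frac{1}{n}\sum_{i=1}^n M_i\right)$.

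First I would establish this determinant inequality by combining two classical facts for $(d\times d)$ positive semidefinite matrices. The concavity of $A \mapsto (\det A)^{1/d}$, i.e. the matrix Brunn--Minkowski inequality \cite{CoverThomas2006, KyFan, MINKOWSKI}, together with the arithmetic--geometric mean inequality, gives the chain
\[
\det\left(\frac{1}{n}\sum_{i=1}^n M_i\right)^{1/d} \geq \frac{1}{n}\sum_{i=1}^n (\det M_i)^{1/d} \geq \prod_{i=1}^n (\det M_i)^{1/(nd)} .
\]
Raising to the $d$-th power yields $\prod_{i=1}^n (\det M_i)^{1/n} \leq \det\left(\frac{1}{n}\sum_{i=1}^n M_i\right)$. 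Therefore the integrand $\log(\cdots)$ is $\leq 0$ at every point, the error integral is $\leq 0$, and the inequality of the corollary follows at once.

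For the equality characterization I would trace equality back through this chain. If $n=1$ the ratio is identically $1$ and equality is trivial. For $n \geq 2$, equality in the corollary forces the error integral to vanish; as its integrand is nonpositive and $d\mu$ has full support on $\{\prod_i\varphi_i > 0\}$, the integrand must vanish $\mu$-almost everywhere, i.e. $\prod_i(\det M_i)^{1/n} = \det\left(\frac{1}{n}\sum_i M_i\right)$ there. Equality in the matrix Brunn--Minkowski step requires the $M_i$ to be mutually proportional, $M_i = \lambda_i M$, and equality in the arithmetic--geometric mean step requires $\det M_1 = \cdots = \det M_n$; combining these, $\lambda_i^d\det M$ is independent of $i$, so all $\lambda_i$ coincide and hence all $M_i$ are equal. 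Since each $\varphi_i \in C^2$, the Hessians $M_i$ are continuous, so equality $\mu$-a.e. upgrades to equality everywhere on the common support, which is precisely the stated condition that $\text{Hess}\left(-\log\varphi_i\right) = \text{Hess}\left(-\log\varphi\right)$ for all $i$ and a single log concave $\varphi$.

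The main obstacle I anticipate is not the inequality but the careful handling of the equality case: reconciling the two distinct equality conditions (proportionality from matrix Brunn--Minkowski and equal determinants from arithmetic--geometric mean) into the single clean conclusion that all Hessians coincide, and upgrading equality $\mu$-almost everywhere to equality everywhere via continuity. One should also track the degenerate locus where some $\det M_i = 0$, where the inequality is trivial but the logarithm is interpreted as $-\infty$; this is why positive definiteness of the Hessians on the support is the natural standing assumption.
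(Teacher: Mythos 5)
Your proof is correct and follows essentially the same route as the paper: both start from the decomposition of $\log \Omega_{\vec\varphi}$ in Proposition \ref{affineinvariant-prop} and bound the error integral via the chain of the matrix Brunn--Minkowski inequality and the arithmetic--geometric mean inequality (the paper's inequality (\ref{BM+GA})), with the equality case obtained by combining the two equality conditions (proportionality and equal determinants). Your treatment is in fact slightly more careful than the paper's on two minor points --- you use the dimensionally correct exponent $1/d$ for $d\times d$ matrices where the paper writes $1/n$, and you address the upgrade from equality $\mu$-a.e.\ to equality everywhere --- but the underlying argument is the same.
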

\vskip 2mm
\begin{proof} For $i=1, \dots, n$, we put $H_i = \text{Hess} \left(-\log \varphi_i \right)$.  Then, by Proposition \ref{affineinvariant-prop}, 
\begin{eqnarray*}
\Omega_{\vec \varphi } = \exp 
\left[ \frac{ D_{KL} \left( P_{\prod_{i=1}^n  \varphi_i^{\frac{1}{n} } }   || Q _{\prod_{i=1}^n  \varphi_i^{\frac{1}{n}}}\right) }{\int \prod_{i=1}^n \varphi_i^\frac{1}{n} dx}\right]  \  \exp\left[ 
{\displaystyle \int}  \log \left( 
 \frac{\prod_{i=1}^n \left( \mbox{det}  H_i \right) ^{\frac{1}{n}}}
 {\mbox{det} \left[ \frac{1}{n} \sum_{i=1}^n H_i \right]   }
\right) d \mu
 \right] .
 \end{eqnarray*}
 It is easy to see that equality holds if $n=1$. Otherwise,  by (\ref{BM+GA}), 
\begin{equation*} 
\prod_{i=1}^{ n}  \left(  \det (H_i ) \right) ^\frac{1}{n}  \leq   \frac{1}{n^n} \   \left(  \det \left( \sum_{i=1}^n H_i \right) \right)=  \det \left( \frac{1}{n} \sum_{i=1}^n H_i \right) ,
\end{equation*}
with equality if and only if for all
$i$, $H_i= \lambda H$, for some $\lambda>0$ and  $H= \text{Hess} \left(-\log \varphi \right)$, for some log concave $\varphi$.
Therefore,
\begin{eqnarray*}
\Omega_{\vec \varphi } \leq  \exp 
\left[ \frac{ D_{KL} \left( P_{\prod_{i=1}^n  \varphi_i^{\frac{1}{n} } }   || Q _{\prod_{i=1}^n  \varphi_i^{\frac{1}{n}}}\right) }{\int \prod_{i=1}^n \varphi_i^\frac{1}{n} dx}\right] ,
\end{eqnarray*}
with equality if and only if for all $i$, $\text{Hess} \left(-\log \varphi_i \right)= \text{Hess} \left(-\log \varphi \right)$, for some log concave $\varphi$, i.e.,  $\text{Hess} \left(-\log \varphi_i \right)$
are all effectively proportional.

 \end{proof}
\vskip 3mm

\begin{cor} \label{affineinvariant-cor}
Let $ \varphi_i :\R^{n}\rightarrow [0, \infty) $ be  log concave functions,  $i=1, \cdots, n$. 
\par
(i) $\Omega_{ \vec \varphi} \leq   \left(\frac{as_{\lambda }(\vec \varphi) }{ \int \left(\varphi_1 \cdots \varphi_n \right)^{\frac{1}{n}} dx }\right)^\frac{1}{\lambda}$ for all $  \lambda > 0$ and  
$\Omega_{\vec \varphi}  \geq  \left(\frac{as_{\lambda }(\vec \varphi) }{ \int \left(\varphi_1 \cdots \varphi_n \right)^{\frac{1}{n}} dx }\right)^\frac{1}{\lambda}$ for all $  \lambda < 0 $.
\par
(ii) Let $\varphi_i =  a_i \varphi $ for some log concave function $ \varphi :\R^{n}\rightarrow [0, \infty)$ and $ a_i >0$. Then
$$ \Omega_{\vec \varphi}  \ \Omega_{\vec \varphi^\circ}  \ \leq  \ 1. $$
\par
(iii) Let $\varphi_i =  a_i \varphi $ for some log concave function $ \varphi :\R^{n}\rightarrow [0, \infty)$ and $ a_i >0$. Then
$$  \Omega_{ \vec \varphi} = \lim_{\alpha \rightarrow 1}  \left( \frac{as_\alpha( \vec \varphi^\circ ) }{ \int (\varphi_1 \cdots \varphi_n )^{\frac{1}{n}} dx } \right)^\frac{1}{ 1- \alpha} . $$
\par
\noindent
Equality holds in (i) and (ii)  if $ \varphi_i =  c_i e^{-\frac{1}{2}\langle Ax,x\rangle} $ where $c_i >0$, $1 \leq i \leq n$,   and $A$ is a $(n \times n)$  positive definite matrix.
\end{cor}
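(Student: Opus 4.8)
The plan is to prove the three parts separately, dispatching (i) and (iii) quickly from the monotonicity and duality already in hand, and reserving (ii), which needs the most care, for last. Throughout I would write $G_{\vec\varphi}(\lambda)$ for the ratio $\left(as_\lambda(\vec\varphi)\big/\int(\varphi_1\cdots\varphi_n)^{1/n}\,dx\right)^{1/\lambda}$. Part (i) is then immediate: Proposition \ref{theo:mono1}(ii) shows that $G_{\vec\varphi}$ is nondecreasing on $(0,\infty)$ and on $(-\infty,0)$, while (\ref{omega}) identifies $\Omega_{\vec\varphi}$ as its one-sided limit at $0$ from either side. Hence $\Omega_{\vec\varphi}=\inf_{\lambda>0}G_{\vec\varphi}(\lambda)\le G_{\vec\varphi}(\lambda)$ for every $\lambda>0$, and $\Omega_{\vec\varphi}=\sup_{\lambda<0}G_{\vec\varphi}(\lambda)\ge G_{\vec\varphi}(\lambda)$ for every $\lambda<0$, which is exactly the claim.

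For part (iii), I would invoke the duality of Proposition \ref{lambda-duality}, which under the hypothesis $\varphi_i=a_i\varphi$ gives, at level $1-\alpha$, the identity $as_\alpha(\vec{\varphi^\circ})=as_{1-\alpha}(\vec\varphi)$. Consequently $\left(as_\alpha(\vec{\varphi^\circ})\big/\int(\varphi_1\cdots\varphi_n)^{1/n}\,dx\right)^{1/(1-\alpha)}=G_{\vec\varphi}(1-\alpha)$, and letting $\alpha\to 1$ sends $1-\alpha\to 0$, so by (\ref{omega}) the right-hand side tends to $\Omega_{\vec\varphi}$, giving the stated identity.

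Part (ii) is where the work lies, and the idea is to pair two estimates from (i) so that the affine surface areas cancel. Fix $\lambda\in(0,1)$. Applying (i) to $\vec\varphi$ at level $\lambda$ and to $\vec{\varphi^\circ}$ at level $1-\lambda$ (both positive), and using the duality $as_{1-\lambda}(\vec{\varphi^\circ})=as_\lambda(\vec\varphi)$, I would obtain
$$\Omega_{\vec\varphi}\,\Omega_{\vec{\varphi^\circ}}\ \le\ \frac{\left(as_\lambda(\vec\varphi)\right)^{\frac{1}{\lambda}+\frac{1}{1-\lambda}}}{\left(\int(\varphi_1\cdots\varphi_n)^{1/n}\,dx\right)^{1/\lambda}\left(\int(\varphi_1^\circ\cdots\varphi_n^\circ)^{1/n}\,dx\right)^{1/(1-\lambda)}},$$
where $\frac{1}{\lambda}+\frac{1}{1-\lambda}=\frac{1}{\lambda(1-\lambda)}$. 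The decisive input is the interpolation inequality $as_\lambda(\vec\varphi)\le as_0(\vec\varphi)^{1-\lambda}\,as_1(\vec\varphi)^{\lambda}$, which is precisely Proposition \ref{theo:mono1}(i) with endpoints $\alpha=1$, $\beta=0$. Since $as_0(\vec\varphi)=\int(\varphi_1\cdots\varphi_n)^{1/n}\,dx$ and, via Proposition \ref{lambda-duality} at level $1$, $as_1(\vec\varphi)=as_0(\vec{\varphi^\circ})=\int(\varphi_1^\circ\cdots\varphi_n^\circ)^{1/n}\,dx$, substituting this bound into the numerator collapses the right-hand side to exactly $1$. No limit is needed, as the estimate holds for each fixed $\lambda\in(0,1)$, and I note that the barycenter assumption is not required here.

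For the equality assertions in (i) and (ii), I would compute directly from (\ref{affineGaussian}) that when $\varphi_i=c_ie^{-\frac12\langle Ax,x\rangle}$ with a common $A$, the ratio $G_{\vec\varphi}(\lambda)$ is independent of $\lambda$, equal to $\det A/c^{2}$ with $c=\left(\prod_i c_i\right)^{1/n}$; hence $\Omega_{\vec\varphi}=G_{\vec\varphi}(\lambda)$ for every $\lambda$, forcing equality in (i). Since $\varphi_i^\circ=c_i^{-1}e^{-\frac12\langle A^{-1}x,x\rangle}$ yields $\Omega_{\vec{\varphi^\circ}}=c^{2}/\det A$ by the same computation, the product equals $1$ and equality holds in (ii). I expect the only genuinely delicate point to be the bookkeeping of exponents in the product bound of part (ii); the one conceptual observation that makes the argument work is that Proposition \ref{theo:mono1}(i) with the endpoints $0$ and $1$ is exactly the interpolation that produces the cancellation.
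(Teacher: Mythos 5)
Your proposal is correct, and parts (i) and (iii) follow the paper's own argument essentially verbatim: (i) is the monotonicity of $\lambda \mapsto \left(as_{\lambda}(\vec \varphi)/\int(\varphi_1\cdots\varphi_n)^{1/n}dx\right)^{1/\lambda}$ from Proposition \ref{theo:mono1}(ii) together with the definition (\ref{omega}), and (iii) is the substitution $as_\alpha(\vec{\varphi^\circ})=as_{1-\alpha}(\vec\varphi)$ from Proposition \ref{lambda-duality} followed by $\alpha\to 1$ (the paper phrases this through $\Omega_{\vec{\varphi^\circ}}$ and then swaps roles, which is the same computation). Where you genuinely diverge is part (ii). The paper's proof is an endpoint argument: apply (i) at $\lambda=1$ to both vectors, getting $\Omega_{\vec\varphi}\le as_1(\vec\varphi)/as_0(\vec\varphi)$ and $\Omega_{\vec{\varphi^\circ}}\le as_1(\vec{\varphi^\circ})/as_0(\vec{\varphi^\circ})$, and observe that by Proposition \ref{lambda-duality} ($as_1(\vec\varphi)=as_0(\vec{\varphi^\circ})$, $as_0(\vec\varphi)=as_1(\vec{\varphi^\circ})$) these two upper bounds are exact reciprocals, so their product is $1$ with no further input. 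You instead fix an interior level $\lambda\in(0,1)$, pair it with $1-\lambda$ via duality, and then need the H\"older interpolation $as_\lambda(\vec\varphi)\le as_0(\vec\varphi)^{1-\lambda}as_1(\vec\varphi)^{\lambda}$ (Proposition \ref{theo:mono1}(i) with endpoints $0,1$) to collapse the resulting quotient to $1$. Your exponent bookkeeping is right and the argument is sound, but the extra interpolation buys nothing: every choice of $\lambda$ yields the same bound, precisely because the interpolation step quietly reduces the estimate back to the endpoint case the paper uses directly. One point in your favor: you verify the Gaussian equality cases explicitly via (\ref{affineGaussian}), showing the ratio is constant in $\lambda$ and equal to $\det A/c^2$ with $c=\left(\prod_i c_i\right)^{1/n}$, whereas the paper's proof of this corollary does not address the equality statement at all.
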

\vskip 2mm
\begin{proof}[Proof]
(i) is deduced immediately from the monotonicity behavior of the function $ \lambda \rightarrow   \left(\frac{as_{\lambda }(\vec \varphi) }{ \int \left(\varphi_1 \cdots \varphi_n \right)^{\frac{1}{n}} dx }\right)^\frac{1}{\lambda} $ and the definition of  $\Omega_{\vec \varphi}$.
\vskip 2mm
\noindent
(ii) By  (i) and  Proposition \ref{lambda-duality},   
$$
\Omega_{ \vec \varphi} \leq   \frac{as_{1}(\vec \varphi) }{ \int \left(\varphi_1 \cdots \varphi_n \right)^{\frac{1}{n}} dx }= \frac{as_{1}(\vec \varphi) }{ as_{0}(\vec \varphi) }
= \frac{as_{0}(\vec \varphi ^\circ) }{ as_{1}(\vec \varphi ^\circ) }, 
\hskip 4mm \Omega_{ \vec \varphi ^\circ} \leq \frac{as_{1}(\vec \varphi ^\circ) }{ as_{0}(\vec \varphi ^\circ) }. 
$$
\par
\noindent
(iii)  We use the  duality formula (\ref{mixed-affine-polar-identity}).
By definition
\begin{eqnarray*}
 \Omega_{ \vec \varphi^\circ} &=&  \lim _{ \lambda \rightarrow 0} \left(\frac{as_{\lambda }(\vec \varphi^\circ ) }{ \int \left(\varphi_1^\circ \cdots \varphi_n^\circ \right)^{\frac{1}{n}} dx }\right)^\frac{1}{\lambda} 
 = \lim _{ \lambda \rightarrow 0}  \left( \frac{as_{1-\lambda}( \vec \varphi) }{  \int  \left(\varphi_1^\circ \cdots \varphi_n^\circ \right)^{\frac{1}{n}}  dx  } \right)^\frac{1}{ \lambda}  \\
 &=& \lim _{ \alpha \rightarrow 1}  \left( \frac{as_{\alpha}( \vec \varphi) }{  \int \left(\varphi_1^\circ \cdots \varphi_n^\circ \right)^{\frac{1}{n}}  dx  }   \right)^\frac{1}{1- \alpha}.
\end{eqnarray*}
Therefore, $ \Omega_{\vec \varphi} = \lim_{\alpha \rightarrow 1}  \left( \frac{as_{\alpha}( \vec \varphi^\circ ) }{  \int  \left(\varphi_1 \cdots \varphi_n \right)^{\frac{1}{n}}  dx  }   \right)^\frac{1}{ 1- \alpha}$.
\end{proof}

\vskip 2mm
We define the {\em i-th mixed $L_\lambda$-affine surface area}  $as_{\lambda, i} (\vec \varphi)$  of $\vec \varphi  = ( \varphi_1, \varphi_2 )$ by
\begin{eqnarray*}\label{def:i-th-mixed-fdiv-logconc}
&& \hskip -7mm as_{\lambda, i} (\vec \varphi) = \\
&& \hskip -7mm {\displaystyle \int}  \hskip -1mm  \left[ \varphi_1   \left(
\frac{e^{\frac{\langle\grad\varphi_1, x \rangle}{\varphi_1}} }{\varphi_1^2}   \mbox{det} \left[  \text{Hess} \left( -\log \varphi_1 \right)\right] \right)^{\lambda} \right]^{\frac{i}{n}} \hskip-1mm
\left[ \varphi_2    \left(
\frac{e^{\frac{\langle\grad\varphi_2, x \rangle}{\varphi_2}} }{\varphi_2^2}   \mbox{det} \left[  \text{Hess} \left( -\log \varphi_2 \right)\right] \right)^{\lambda} \right]^{\frac{n-i}{n}} \hskip -5mm dx.
\end{eqnarray*}
Clearly, for all $\lambda$, $as_{\lambda, 0} (\vec \varphi) = as_{\lambda} ( \varphi_2) $ and $as_{\lambda, n } (\vec \varphi) = as_{\lambda} ( \varphi_1) $.
Moreover, $as_{0, n} (\vec \varphi) = \int_{} \varphi_1 dx $  and $as_{1, n} (\vec \varphi) = \int_{} \varphi_1^\circ dx $ (see, \cite{CaglarWerner}).
We also give a definition for  $ as_{\infty, i } (\vec \varphi)$ and  $ as_{-\infty, i} (\vec \varphi)$.
\begin{equation*}\label{as+infinity}
\hskip -1mm as_{\infty, i} (\vec \varphi) \ =   \  \max_{ x }   \left[ \frac{e^{\frac{\langle\grad\varphi_1, x \rangle}{\varphi_1}}}{\varphi_1^2}  \  \mbox{det} \left[  \text{Hess} \left(-\log \varphi_1 \right)\right] \right]^{\frac{i}{n}}  \left[ \frac{e^{\frac{\langle\grad\varphi_2, x \rangle}{\varphi_2}}}{\varphi_2^2}  \  \mbox{det} \left[  \text{Hess} \left(-\log \varphi_2 \right)\right] \right]^{\frac{n-i}{n}} \hskip -5mm.        
\end{equation*}
$$
as_{-\infty, i} (\vec  \varphi) =\frac{1}{ as_{\infty, i} (\vec \varphi)}.
$$
It is  easy to see that these expressions are  invariant under symmetric linear transformations with determinant $1$.
\vskip 3mm
\noindent
\textbf{Remarks.}
(i) It follows from Proposition \ref{lambda-duality} that $ as_{1-\lambda, i} (\vec \varphi) = as_{\lambda, i} (\vec {\varphi^\circ})$ where $\vec \varphi = ( \varphi_1, \varphi_2)$ such that $ \varphi_1 = a \varphi_2$, $a>0.$
\par
\noindent
(ii) Let $\varphi_l(x)= c_l e^{- \frac{1}{2} \langle A_l x,x\rangle}$,  where
 $c_l$ is a positive constant  and $A_l$ is a $(n \times n)$  positive definite  matrix for $ l=1,2$. Then,
\begin{equation}\label{affineGaussian3}
as_{\lambda, i} (\vec \varphi) = \  \left( c_1^{i} \ c_2^{n-i}  \right)^{\frac{1 - 2 \lambda}{n}} \  \left( (\det(A_1))^{i}  ( \det(A_2) )^{n-i} \right)^{\frac{\lambda}{n}} \ \frac{( 2 n \pi)^{\frac{n}{2}}}{ \left( \det( i A_1 + (n-i) A_2) \right)^{\frac{1}{2}}}.
\end{equation}
\noindent

\vskip 2mm

The next proposition is identical to Proposition \ref{theo:mono1} and the proof  follows by  H\"older's inequality.
\vskip 2mm
\begin{prop} \label{theo:mono2}
Let $i \in \R$ and  $\alpha \neq \beta,  \lambda \neq \beta$ be
real numbers. Let $\varphi_1, \varphi_2: \R^{n}\rightarrow [0, \infty) $ be log concave functions.
\par
(i) If $1 \leq \frac{\alpha-\beta}{\lambda-\beta} < \infty$, then 
$
as_{\lambda, i} (\vec \varphi)\leq \big(as_{\alpha, i}  (\vec \varphi) \big)^{\frac{\lambda-\beta}{\alpha-\beta}}
\big(as_{\beta, i} (\vec \varphi)\big)^{\frac{\alpha-\lambda}{\alpha-\beta}}.
$
\par
(ii) If $1 \leq \frac{\alpha}{ \lambda} < \infty$, then 
$
as_{\lambda, i}(\vec \varphi) \leq \left(as_{\alpha, i}( \vec \varphi)\right)^\frac{\lambda}{\alpha} \big(\int \varphi_1^{\frac{i}{n}} \varphi_2^{\frac{n-i}{n}}\big)^{\frac{\alpha-\lambda}{\alpha}}.
$
\par
(iii)  If $ \beta \leq \lambda $, then 
$ as_{\lambda,i}(\vec \varphi) \leq  \big(as_{ \infty, i }(\vec \varphi) \big)^{ \lambda - \beta}
 \ as_{\beta,i} ( \vec \varphi).
$ 
\par
\noindent
If $ \frac{\alpha-\beta}{\lambda-\beta}=1$ in (i),  respectively $\frac{\alpha}{\lambda}=1$ in (ii), then $\alpha=\lambda$ and equality holds trivially in (i)
respectively (ii). 
Equality also holds if  $\varphi_l(x)= c_l e^{- \frac{1}{2} \langle A_l x,x\rangle}$,  where
 $c_l$ is a positive constant  and $A_l$ is a $(n \times n)$  positive definite  matrix for $ l=1,2$.
\end{prop}

\vskip 3mm

The following proposition is a direct consequence of  Proposition \ref{i-th-mixed-prop}.
\begin{prop}\label{i-th-mixed-prop-logc}
Let $ \varphi_1, \varphi_2:\R^{n}\rightarrow [0, \infty)$ be  log concave functions.
If $j \leq i \leq k$ or $k \leq i \leq j$, then
\begin{eqnarray*}\label{def:i-th-mixed-fdiv-log}
as_{\lambda, i} (\vec \varphi)   \  \leq  \  \left[ as_{\lambda, j } (\vec \varphi)  \right]^{\frac{k-i}{k-j}} \times  \  \left[ as_{\lambda, k} (\vec \varphi)  \right]^{\frac{i-j}{k-j}} .
\end{eqnarray*}
Equality holds trivially if $ i=k$ or $i=j$. 
Otherwise, equality holds if and only if  one of the functions $\varphi_l  \left(
\frac{e^{\frac{\langle\grad\varphi_l, x \rangle}{\varphi_l}} }{\varphi_l^2}   \mbox{det} \left[  \text{Hess} \left( -\log \varphi_l \right)\right] \right)^{\lambda} $, $ l=1,2$,  
is null or they are effectively proportional.
\end{prop}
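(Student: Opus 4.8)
The plan is to recognize $as_{\lambda, i}(\vec\varphi)$ as the special case of the $i$-th mixed $f$-divergence of Definition \ref{i-th-mixed-f-div-log} in which $f_1(t) = f_2(t) = t^\lambda$, and then to invoke Proposition \ref{i-th-mixed-prop} directly. First I would note that for every real $\lambda$ the function $t \mapsto t^\lambda$ maps $(0,\infty)$ into $\R_+$ and is either convex (when $\lambda \le 0$ or $\lambda \ge 1$) or concave (when $0 \le \lambda \le 1$), so it meets the hypotheses of Proposition \ref{i-th-mixed-prop}. Substituting $f_l(t) = t^\lambda$ into the integrand defining $D_{\vec f}(P_{\vec\varphi}, Q_{\vec\varphi}; i)$ and using $q_{\varphi_l} = \varphi_l$ from (\ref{Q,P1}), the exponent $\lambda$ passes inside the bracketed factors and reproduces exactly the integrand of $as_{\lambda, i}(\vec\varphi)$; hence $as_{\lambda, i}(\vec\varphi) = D_{\vec f}(P_{\vec\varphi}, Q_{\vec\varphi}; i)$ for this choice of $\vec f$.

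Given this identity, the inequality is immediate. Proposition \ref{i-th-mixed-prop} asserts that, whenever $i$ lies between $j$ and $k$, one has $D_{\vec f}(P_{\vec\varphi}, Q_{\vec\varphi}; i) \le [D_{\vec f}(P_{\vec\varphi}, Q_{\vec\varphi}; j)]^{(k-i)/(k-j)}\,[D_{\vec f}(P_{\vec\varphi}, Q_{\vec\varphi}; k)]^{(i-j)/(k-j)}$; translating each divergence into the corresponding affine surface area through the identity above yields the claimed bound for $as_{\lambda, i}$.

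For the equality characterization I would read off the condition from Proposition \ref{i-th-mixed-prop}, which states that in the nontrivial case $i \neq j, k$ equality holds if and only if one of the functions $f_l(p_{\varphi_l}/q_{\varphi_l})\, q_{\varphi_l}$, $l=1,2$, is null or they are effectively proportional. Computing $p_{\varphi_l}/q_{\varphi_l} = \varphi_l^{-2}\, e^{\langle\grad\varphi_l, x\rangle/\varphi_l}\, \det[\text{Hess}(-\log\varphi_l)]$ from (\ref{Q,P1}) and raising it to the power $\lambda$, these functions become precisely $\varphi_l\big(\frac{e^{\langle\grad\varphi_l, x\rangle/\varphi_l}}{\varphi_l^2}\det[\text{Hess}(-\log\varphi_l)]\big)^{\lambda}$, $l=1,2$, which is the condition stated in the proposition.

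Since every step is a direct specialization, there is no substantive obstacle. The only points requiring care are confirming that $t^\lambda$ is convex or concave for each real $\lambda$ (so the hypotheses of Proposition \ref{i-th-mixed-prop} hold for all $\lambda$), and checking that the exponent $\lambda$ distributes correctly into the bracketed factors of the $i$-th mixed $f$-divergence integrand to recover the $i$-th mixed $L_\lambda$-affine surface area.
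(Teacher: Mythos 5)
Your proposal is correct and matches the paper's own argument: the paper proves this proposition exactly by noting it is a direct consequence of Proposition \ref{i-th-mixed-prop}, i.e., by specializing $f_1(t) = f_2(t) = t^{\lambda}$ (convex or concave on $(0,\infty)$ for every real $\lambda$) in Definition \ref{i-th-mixed-f-div-log}, so that $as_{\lambda,i}(\vec\varphi) = D_{\vec f}(P_{\vec\varphi}, Q_{\vec\varphi}; i)$ and the equality conditions carry over verbatim. Your verification of the identification of integrands and of the equality-case functions $f_l(p_{\varphi_l}/q_{\varphi_l})\,q_{\varphi_l}$ is exactly the computation the paper leaves implicit.
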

\vskip 2mm
\noindent
In Proposition \ref{i-th-mixed-prop-logc}, if we let $j =0$ and $k=n$, then for all $\lambda$ and $0 \leq i \leq n$
\begin{eqnarray}\label{def:i-th-mixed-fdiv-log2}
\left[ as_{\lambda, i} (\vec \varphi)  \right]^n  \  \leq  \  \left[ as_{\lambda } ( \varphi_2)  \right]^{n-i}   \  \left[ as_{\lambda} (\varphi_1)  \right]^{i} .
\end{eqnarray}
If we let $i =0$ and $j=n$, then for all $\lambda$ and $ k \leq 0$
\begin{eqnarray}\label{def:i-th-mixed-fdiv-log3}
\left[ as_{\lambda, k} (\vec \varphi)  \right]^n  \  \geq  \  \left[ as_{\lambda } ( \varphi_2)  \right]^{n-k}   \  \left[ as_{\lambda} (\varphi_1)  \right]^{k} .
\end{eqnarray}
From inequality (\ref{def:i-th-mixed-fdiv-log2}) and an inequality  of \cite{CFGLSW}, already quoted here as inequality (\ref{asa-BS}), one gets for functions with barycenter at $0$,
\begin{eqnarray*}\label{def:i-th-mixed-fdiv-log4}
 \left[as_{\lambda, i} (\varphi_1, \varphi_2 )  \right]^n  \left[as_{\lambda, i} (\varphi_1^\circ, \varphi_2^\circ)\right]^n   &&  \leq  \   \left[ as_{\lambda } ( \varphi_2) as_{\lambda } ( \varphi_2^\circ) \right]^{n-i}   \  \left[ as_{\lambda} (\varphi_1)  as_{\lambda} (\varphi_1^\circ) \right]^{i} \nonumber   \\
&&  \ 
\leq (2 \pi)^{n^2 } 
\end{eqnarray*}
holds true for all $\lambda \in [0,1]$ and $0 \leq i \leq n$. 
Hence, we have proved the following proposition which also follows directly from Proposition \ref{duality-mixed-log}.
\vskip 2mm
\begin{prop}
Let $\varphi_1, \varphi_2$ be log concave functions with barycenter at $0$. 
If $\lambda \in [0,1]$ and $0 \leq i \leq n$, then
\begin{eqnarray}\label{def:i-th-mixed-fdiv-log4}
 as_{\lambda, i} (\vec \varphi)  as_{\lambda, i} (\vec {\varphi^\circ} )  \ 
 \leq (2 \pi)^{n } .
\end{eqnarray}
Equality holds if and only if $\varphi_l = c_l e^{ -\frac{1}{2}  \langle Ax, x \rangle }$ where $c_l >0$, $ l =1,2$, and $A$ is a $(n \times n)$  positive definite matrix.
\end{prop}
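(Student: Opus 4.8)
The inequality requires no genuinely new argument: it is precisely the two-line estimate displayed immediately before the statement. The plan is to apply inequality (\ref{def:i-th-mixed-fdiv-log2}) twice, once to $\vec\varphi=(\varphi_1,\varphi_2)$ and once to $\vec{\varphi^\circ}=(\varphi_1^\circ,\varphi_2^\circ)$, and then multiply the two resulting bounds. This regroups the four factors into the two pairs $as_{\lambda}(\varphi_1)\,as_{\lambda}(\varphi_1^\circ)$ (with exponent $i$) and $as_{\lambda}(\varphi_2)\,as_{\lambda}(\varphi_2^\circ)$ (with exponent $n-i$). Since each pair is at most $(2\pi)^n$ by the Blaschke Santal\'o inequality (\ref{asa-BS}) of \cite{CFGLSW}, the product of the $n$-th powers is at most $\big[(2\pi)^n\big]^{i}\big[(2\pi)^n\big]^{n-i}=(2\pi)^{n^2}$, and taking $n$-th roots yields (\ref{def:i-th-mixed-fdiv-log4}). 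For integer $i$ one may instead identify $as_{\lambda,i}(\varphi_1,\varphi_2)$ with $as_\lambda(\vec\psi)$, where $\vec\psi$ is the $n$-vector made of $i$ copies of $\varphi_1$ and $n-i$ copies of $\varphi_2$, and invoke Proposition \ref{duality-mixed-log} directly; this is the remark that the claim also follows from Proposition \ref{duality-mixed-log}.

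For the equality characterization, the sufficiency is a direct computation. Assuming $\varphi_l(x)=c_l e^{-\frac{1}{2}\langle Ax,x\rangle}$ with a common positive definite $A$, I would evaluate both $as_{\lambda,i}(\vec\varphi)$ and $as_{\lambda,i}(\vec{\varphi^\circ})$ from the closed form (\ref{affineGaussian3}), using that $\varphi_l^\circ(x)=c_l^{-1}e^{-\frac{1}{2}\langle A^{-1}x,x\rangle}$. In the product the $c_l$- and $(\det A)$-factors cancel, leaving $(2n\pi)^n\big/\big(\det(iA+(n-i)A)\,\det(iA^{-1}+(n-i)A^{-1})\big)^{1/2}=(2n\pi)^n/n^n=(2\pi)^n$, which confirms equality.

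For the necessity, which is the only substantive part, suppose $0<i<n$ and that equality holds in (\ref{def:i-th-mixed-fdiv-log4}). Then equality must hold throughout the chain above; in particular, since the exponents $i$ and $n-i$ are strictly positive and each pairing is bounded by $(2\pi)^n$, both Blaschke Santal\'o inequalities $as_{\lambda}(\varphi_l)\,as_{\lambda}(\varphi_l^\circ)\le(2\pi)^n$ are equalities. By the equality case of (\ref{asa-BS}) from \cite{CFGLSW}, this forces $\varphi_l(x)=c_l e^{-\frac{1}{2}\langle A_l x,x\rangle}$ for positive definite $A_1,A_2$ and constants $c_1,c_2>0$. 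Substituting these into (\ref{affineGaussian3}) and imposing the standing equality, the $c_l$- and $(\det A_l)$-factors again cancel and one arrives, exactly as in the derivation of (\ref{newidentity2}), at the Santal\'o-type matrix identity $\det(iA_1+(n-i)A_2)\,\det(iA_1^{-1}+(n-i)A_2^{-1})=n^{2n}$. The main obstacle is to extract $A_1=A_2$ from this identity, and this is supplied by the combined Brunn Minkowski and arithmetic-geometric mean chain (\ref{BM+GA}): applying it to the $n$-tuple consisting of $i$ copies of $A_1$ and $n-i$ copies of $A_2$, and simultaneously to the tuple of their inverses, forces every entry of the tuple to be a common scalar multiple of a single matrix with all determinants equal, whence $A_1=A_2=:A$. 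Finally, the endpoints $i=0$ and $i=n$ are degenerate, since there only one of $\varphi_1,\varphi_2$ enters $as_{\lambda,i}$ and the statement collapses to the single-function Blaschke Santal\'o equality of \cite{CFGLSW}.
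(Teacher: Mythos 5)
Your proposal matches the paper's own proof essentially step for step: the inequality is obtained by applying (\ref{def:i-th-mixed-fdiv-log2}) to $\vec\varphi$ and $\vec{\varphi^\circ}$ and invoking the Blaschke Santal\'o inequality (\ref{asa-BS}) of \cite{CFGLSW}, sufficiency of the equality case is checked via (\ref{affineGaussian3}) and the duality formula, and necessity is reduced to equality in (\ref{asa-BS}) for $l=1,2$, the matrix identity $\det(iA_1+(n-i)A_2)\,\det(iA_1^{-1}+(n-i)A_2^{-1})=n^{2n}$, and the Brunn--Minkowski/AM--GM chain (\ref{BM+GA}) forcing $A_1=A_2$. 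Your explicit remark that the endpoint cases $i=0$ and $i=n$ are degenerate is a small extra precision the paper passes over silently, but it does not change the argument.
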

\begin{proof}
The inequality follows from above. Using (\ref{affineGaussian3}) and the duality formula  $ as_{1-\lambda, i} (\vec \varphi) = as_{\lambda, i} (\vec {\varphi^\circ})$, it is easy to see that equality holds  in (\ref{def:i-th-mixed-fdiv-log4}) if $\varphi_l = c_l e^{ -\frac{1}{2}  \langle Ax, x \rangle }$ where $c_l >0$ and $A$ is a $(n \times n)$  positive definite matrix. On the other hand, if equality holds in (\ref{def:i-th-mixed-fdiv-log4})   
 then equality holds in particular, for $ l =1,2$, in the inequality (\ref{asa-BS}) which means that, $\varphi_l(x) =  c_l e^{-\frac{1}{2} \langle A_l x, x \rangle}$, where
 $c_l$ is a positive constant  and $A_l$ is a $(n \times n)$  positive definite  matrix.
Note that for $\varphi_l(x) =  c_l e^{-\frac{1}{2} \langle A_l x, x \rangle}$, the dual function is  $\varphi_l^\circ (x) =  c_l^{-1} e^{-\frac{1}{2} \langle A_l^{-1} x, x \rangle}$. Thus, also using (\ref{affineGaussian3}), the
equality condition leads to the following identity
 \begin{equation}\label{newidentity3}
\left( \det (i A_1  + (n-i)A_2) \ \det (i A_1^{-1} + (n-i)A_2^{-1} )  \right)^{\frac{1}{2}} = n^n
\end{equation} 
Therefore, by (\ref{BM+GA}), we must have $A_1 = A_2$.
Hence we have that $\varphi_l(x) = c_l e^{ - \langle A x , x \rangle / 2  }$.
\end{proof}

\vskip 4mm

\normalsize

\vskip 2mm 
\noindent 
Umut Caglar\\
{\small Department of Mathematics} \\
{\small Case Western Reserve University} \\
{\small Cleveland, Ohio 44106, U. S. A.} \\
{\small \tt umut.caglar@case.edu}\\ \\
\noindent
\and 
Elisabeth Werner\\
{\small Department of Mathematics \ \ \ \ \ \ \ \ \ \ \ \ \ \ \ \ \ \ \ Universit\'{e} de Lille 1}\\
{\small Case Western Reserve University \ \ \ \ \ \ \ \ \ \ \ \ \ UFR de Math\'{e}matique }\\
{\small Cleveland, Ohio 44106, U. S. A. \ \ \ \ \ \ \ \ \ \ \ \ \ \ \ 59655 Villeneuve d'Ascq, France}\\
{\small \tt elisabeth.werner@case.edu}\\ \\


\begin{thebibliography}{}


\bibitem {AliSilvery1966}
{\sc M. S. Ali and  D. Silvey}, {\em A general class of coefficients of divergence
of one distribution from another}, Journal of the Royal Statistical Society, Series B 28, (1966), 131--142.

\vskip 2mm

\bibitem {ArtKlarMil}
{\sc S. Artstein-Avidan,  B. Klartag and  V. Milman}, {\em The
Santal\'o point of a function, and a functional form of Santal\'o
inequality}, Mathematika 51, (2004), 33--48.

\vskip 2mm

\bibitem {ArtKlarSchuWer}
{\sc S. Artstein-Avidan,  B. Klartag, C. Sch\"utt and E. Werner}, {\em 
Functional affine-isoperimetry and an inverse logarithmic Sobolev inequality},
Journal of Functional Analysis, vol. 262, no.9, (2012), 4181--4204.

\vskip 2mm

\bibitem{KBallthesis}
{\sc K. Ball}, {\em Isometric problems in $ l_p $ and sections of convex sets}, PhD dissertation, University
of Cambridge (1986).

\vskip 2mm

 \bibitem{Bhattacharyya1946}
 {\sc A. Bhattacharyya}, {\em On some analogues to the amount of information
 and their uses in statistical estimation}, Sankhya, 8, (1946), 1--14.

\vskip 2mm


\bibitem{CaglarWerner}
{\sc U. Caglar and E.M. Werner}, {\em Divergence for $s$-concave and log concave functions}, Advances  in Mathematics 257,  (2014), 219--247.

\vskip 2mm

\bibitem{CFGLSW}
{\sc U. Caglar, M. Fradelizi, O. Guedon, J. Lehec, C. Sch\"utt and E. Werner}, {\em Functional version of $L_p$-affine surface area and entropy inequalities}, arXiv:1402.3250.
\vskip 2mm

\bibitem{CoverThomas2006}
{\sc T. Cover and J. Thomas}, {\em Elements of information theory}, second ed., Wiley-Interscience,  (John Wiley and Sons), Hoboken, NJ, (2006). 

\vskip 2mm


\bibitem{Csiszar}
{\sc I. Csisz\'ar}, {\em Eine informationstheoretische Ungleichung und ihre Anwendung
auf den Beweis der Ergodizit{\"a}t von Markoffschen Ketten}, Publ.
Math. Inst. Hungar. Acad. Sci. ser. A 8, (1963), 84--108.

\vskip 2mm


\bibitem{KyFan}
{\sc K. Fan},  {\em On a theorem of Weyl concerning eigenvalues of linear transformations}, 
Proc. Natl. Acad. Sci. U S A, 36(1),  (1950), 31--35.

\vskip 2mm

\bibitem{MeyerFradelizi2007}
{\sc M. Fradelizi and  M. Meyer}, {\em  Some functional forms of
Blaschke-Santal\'o inequality}, Math. Z. 256, no. 2, (2007),
379--395.

\vskip 2mm

\bibitem{GarciaWilliamson2012}
{\sc D. Garcia-Garcia and R.C. Williamson}
 {\em Divergences and Risks for Multiclass Experiments},
JMLR: Workshop and Conference Proceedings (2012),  1--20.

\vskip 2mm

\bibitem{Gardner2002}
{\sc R. J. Gardner}, {\em The Brunn-Minkowski Inequality}, 	Bull. Amer. Math. Soc. 39, (2002), 355--405 .

\vskip 2mm

\bibitem{GaZ}
{\sc R. J. Gardner and G. Zhang},
{\em Affine inequalities and radial mean bodies}, 
 Amer. J. Math. 120, no.3, (1998), 505--528.

\vskip 2mm

\bibitem{GLYZ}
{\sc O.G. Guleryuz, E. Lutwak, D. Yang, and G. Zhang}, {\em Information theoretic inequalities for
contoured probability distributions}, IEEE Transactions on Information Theory, 48, (2002), 2377--2383.

\vskip 2mm

\bibitem{HabSch2}
{\sc C. Haberl and F.  Schuster,} {\em General Lp affine isoperimetric inequalities},
J. Differential Geometry  83, (2009), 1--26.

\vskip 2mm

\bibitem{HardyLittlewoodPolya}
{\sc G.H. Hardy, J.E. Littlewood and G. P\'{o}lya}, {\em
 Inequalities}, 2nd ed., Cambridge Univ. Press, 1952.

\vskip 2mm

\bibitem{JenkinsonWerner}
{\sc J. Jenkinson and E. Werner}, {\em Relative entropies for convex bodies}, to appear in Transactions of the AMS.

\vskip 2mm

 \bibitem{KullbackLeibler1951}
{\sc S. Kullback and R. Leibler,} {\em On information and sufficiency},  Ann.
 Math. Statist., 22 (1951), 79-86.

\vskip 2mm

\bibitem{Lehec2009}
{\sc J. Lehec}, {\em A simple proof of the functional Santal\'o inequality}, C. R. Acad. Sci. Paris. S\'er.I
347, (2009), 55--58.

\vskip 2mm


\bibitem{LieseVajda1987}
{\sc F. Liese and I. Vajda},
{\em Convex Statistical Distances},
Leipzig, Germany:Teubner, (1987).

\vskip 2mm

\bibitem{LieseVajda2006}
{\sc F. Liese and I. Vajda},
{\em On Divergences and Information in Statistics and Information Theory},
IEEE Transactions on Information Theory  52,  (2006), 4394--4412.

\vskip 2mm


\bibitem{Ludwig2003}
{\sc M. Ludwig}, {\em Ellipsoids and matrix valued valuations}, Duke Math. J. 119, (2003), 159--188.

\vskip 2mm


\bibitem{Ludwig-Reitzner1999}
{\sc M. Ludwig and M. Reitzner}, {\em A characterization of affine surface area},  Adv. Math. 147, (1999), 138--172.

\vskip 2mm

\bibitem{Ludwig-Reitzner}
{\sc M. Ludwig and M. Reitzner}, {\em A classification of $SL(n)$
invariant valuations},  Annals of Math. 172, (2010), 1223--1271.

\vskip 2mm

\bibitem{Lut1987}
 {\sc E. Lutwak}, {\em Mixed affine surface area}, J. Math. Anal. Appl.
 125, (1987), 351--360.

\vskip2mm

\bibitem{Lutwak1996}{\sc E. Lutwak}, {\em The Brunn-Minkowski-Firey theory II : Affine and
geominimal surface areas}, Adv. Math. 118, (1996), 244--294.

\vskip 2mm


\bibitem{LutwakYangZhang2002}
{\sc E. Lutwak, D. Yang and G. Zhang}, {\em Sharp Affine $L_p$ Sobolev inequalities},
J. Differential Geom.  62, (2002), 17--38.

\vskip 2mm

\bibitem{LutwakYangZhang2002/1} 
{\sc E. Lutwak, D. Yang and G. Zhang}, {\em The Cramer--Rao inequality for star bodies},
Duke Math. J. 112, (2002), 59--81.

\vskip 2mm

\bibitem{LutwakYangZhang2004} 
{\sc E. Lutwak, D. Yang and G. Zhang}, {\em Volume inequalities for subspaces of $L_p$},
J. Differential Geometry 68, (2004), 159--184.

\vskip 2mm

\bibitem{LutwakYangZhang2004/1} 
{\sc E. Lutwak, D. Yang and G. Zhang}, {\em Moment-entropy inequalities},
Ann. Probab. 32, (2004), 757--774. 

\vskip 2mm

\bibitem{LutwakYangZhang2005} 
{\sc E. Lutwak, D. Yang and G. Zhang}, {\em Cramer-Rao and moment-entropy inequalities for R\'enyi entropy and generalized Fisher information},
IEEE Transactions on Information Theory 51, (2005), 473--478.

\vskip 2mm


\bibitem{Matusita1967} 
{\sc K. Matusita}, {\em On the notion of affinity of several distributions and some of its applications}, 
 Ann. Inst. Statist. Math., 19, (1967), 181--192.
 
 \vskip2mm

 \bibitem{Matusita1971} 
 {\sc K. Matusita}, {\em Some properties of affinity and applications}, Ann. Inst. Statist. Math., 23, (1971), 137--155.

\vskip2mm 

\bibitem{Menendez}
{\sc M.L. Men\'{e}ndez, J.A. Pardp, L. Pardo and K. Zografos}, {\em A preliminary test in classification and probabilities of misclassification}, 
  Statistics, 39, (2005), 183--205.
    
\vskip 2mm
		
\bibitem{MW1}
{\sc M. Meyer and E. Werner},
{\em The {S}antal\ensuremath{\mbox{\'o}}-regions of a convex body}
Transactions of the {AMS}, 350, (1998), 4569--4591.

\vskip 2mm

\bibitem{MW2}
{\sc M. Meyer and E. Werner}, {\em On the p-affine surface area},
Adv. Math., 152, (2000), 288--313.

\vskip 2mm

\bibitem{MilmanRotem2013}
{\sc V. Milman and  L. Rotem}, {\em Mixed integrals and related inequalities},
Journal of Functional Analysis, 264, (2013), 570--604.

\vskip2mm

\bibitem{MINKOWSKI}
{\sc H. Minkowski}, {\em Diskontinuit\"atsbereich f\"ur arithmetische 
\"Aquivalenz},
Journal f\"ur Math., 129, (1950), 220--274.

\vskip 2mm

 \bibitem{MoralesPardo1998}
 {\sc D. Morales, L. Pardo and K. Zografos}, {\em Informational distances and related
 statistics in mixed continuous and categorical variables}, J. Statist. Plann. Inference, 75, (1998), 47--63.

\vskip 2mm

\bibitem{Morimoto1963}  {\sc T. Morimoto}, {\em Markov processes and the H-theorem}, J. Phys. Soc. Jap. 18, (1963), 328--331.

\vskip 2mm

\bibitem{OsterrVajda}
{\sc F.  {\"O}sterreicher and I. Vajda},
{\em A new class of metric divergences on probability spaces and its applicability in statistics},
Ann. Inst. Statist. Math.,  55,  (2003), 639--653.

\vskip 2mm
 
\bibitem{PaourisWerner2011} {\sc G. Paouris and E. Werner}, {\em Relative entropy of cone measures and $L_p$ centroid bodies},   
Proceedings London Math. Soc. (3) 104,  (2012),  253--286. 

\vskip 2mm

\bibitem{ReidWilliamson2011} 
{\sc M. D.  Reid and R.C. Williamson}
{\em Information, Divergence and Risk for Binary Experiments},
Journal of Machine Learning Research 12, (2011), 731--817.

\vskip 2mm

\bibitem{Schu}
{\sc F. Schuster}, {\em Crofton measures and Minkowski valuations},  Duke Math. J. 154, (2010), 1--30.

\vskip 2mm

\bibitem{Sch} 
{\sc R. Schneider},  {\em Convex Bodies: The Brunn-Minkowski
 theory}, Cambridge Univ. Press, 1993.

\vskip 2mm

 \bibitem{Sgarro1981}
 {\sc A. Sgarro}, {\em Informational divergence and the dissimilarity of probability distributions},  Calcolo,
  18, (1981), 293--302.
 
 \vskip 2mm

 \bibitem{Sibson1969}
{\sc R. Sibson}, {\em Information radius},  Probab. Theory Related Fields, 14, (1969), 149--160.

\vskip2mm

\bibitem{SW1990} 
{\sc C. Sch\"utt and E. Werner}, {\em The convex floating body},   
Math. Scand.  66, (1990), 275--290.

\vskip 2mm


\bibitem{SW2004}
{\sc C. Sch{\"u}tt and E. Werner}, {\em Surface bodies and
p-affine surface area},  Adv.  Math.  187, (2004), 98--145.

\vskip 2mm

 
 \bibitem{Toussaint1974} 
 {\sc G.T. Toussaint}, {\em Some properties of Matusita's measure of affinity of several distributions}, Ann. Inst.
 Statist. Math., 26, (1974), 389--394.

\vskip 2mm


\bibitem{Werner2007}
{\sc E. Werner}, {\em On $L_p$-affine surface areas}, Indiana Univ. Math. J.  56,  No. 5,  (2007),  2305--2324.

\vskip 2mm

\bibitem{Werner2012/1}
{\sc E. Werner}, {\em
 R\'enyi Divergence and $L_p$-affine surface area  for convex bodies},
Adv.  Math. 230, (2012), 1040--1059.

\vskip 2mm

\bibitem{Werner2012} {\sc E. Werner}, {\em f-Divergence for convex bodies}, 
Proceedings of  the ``Asymptotic Geometric Analysis" workshop, Fields Institute, Toronto,  (2012).

\vskip 2mm

\bibitem{WernerYe2008} {\sc E. Werner and D. Ye}, {\em New $L_p$-affine isoperimetric inequalities},   
{Adv.  Math.}  218, (2008), 762--780. 

\vskip 2mm

\bibitem{WernerYe2009} {\sc E. Werner and D. Ye}, {\em Inequalities for mixed $p$-affine surface area},   
Math. Annalen  347 (2010), 703--737. 

\vskip 2mm

\bibitem{WernerYe2013} {\sc E. Werner and D. Ye}, {\em On mixed $f$-divergence for multiple pairs of measures},   
arXiv:1304.6792.

\vskip2mm

\bibitem{Werner-Yolcu} {\sc E. Werner and T. Yolcu}, {\em Equality characterization and stability for entropy inequalities},   
arXiv:1312.4148.

\vskip2mm

 \bibitem{Ye2012}
 {D. Ye},  {\em Inequalities for general mixed affine surface areas}, J. London Math. Soc., 85, (2012), 101--120. 

\vskip2mm

\bibitem{Zhang1994}
{\sc G. Zhang}, {\em Intersection bodies and Busemann-Petty inequalities in $\mathbb{R}^4$}, Annals of Math. 140, (1994), 331--346.

\vskip 2mm

\bibitem{Zografos1998}
{\sc K. Zografos}, {\em $f$-dissimilarity of several distributions in testing statistical hypotheses}, Ann. Inst. Statist. Math., 50, (1998), 295--310.


\end{thebibliography}
\end{document}